\documentclass[english, 12pt]{amsart}

\usepackage{tikz}
\usepackage{aliascnt}
\usepackage{mathrsfs}
\usepackage[all,poly,knot]{xy}
\usepackage{hyperref}
\usepackage{comment}  
\usepackage{csquotes}   
\usepackage{amssymb,amsbsy,amsmath,amsfonts,amssymb,amscd,
	graphics,color,footmisc,fancyhdr,multicol,fancybox,
	graphicx,mathrsfs,rotating,ifthen,wasysym}

\usepackage{times}
\usepackage{pdfpages}
\usepackage{multirow}
\usepackage{nccrules}
\usepackage{textcomp}
\usepackage{comment}
\usepackage{framed}
\usepackage[all]{xy}
\usepackage{graphicx}
\usepackage{pgf,tikz}
\usepackage{mathrsfs}
\usetikzlibrary{arrows}
\usepackage{lipsum}
\usepackage{mathtools}

\newcommand{\medcup}{\mathbin{\scalebox{1.5}{\ensuremath{\cup}}}}

\DeclareMathOperator{\dif}{\text{\normalfont d}}

\DeclareMathOperator{\hol}{Hol}

\DeclareMathOperator{\ord}{ord}

\DeclareMathOperator{\bs}{Bs}

\def\log{\mathrm{log}\,}

\theoremstyle{plain}

\newtheorem{thm}{Theorem}[section]  

\newtheorem{cor}[thm]{{Corollary}} 

\newtheorem{lem}[thm]{{Lemma}}

\newtheorem{pro}[thm]{Proposition}

\newtheorem{rem}[thm]{Remark}
\newtheorem{defi}[thm]{Definition}

\theoremstyle{remark}

\numberwithin{equation}{section}

\usepackage[top=1.5in, bottom=1.5in, left=1in, right=1in]{geometry}

\usepackage[hyperpageref]{backref}

\usepackage{xcolor}
\hypersetup{
colorlinks,
linkcolor={red!50!black},
citecolor={blue!62!black},
urlcolor={blue!80!black}
}
\theoremstyle{plain}
\newcommand{\thistheoremname}{}
\newtheorem*{genericthm*}{\thistheoremname}
\newenvironment{namedthm*}[1]{\renewcommand{\thistheoremname}{#1}%
\begin{genericthm*}}
{\end{genericthm*}}

\newtheoremstyle{named}{}{}{\itshape}{}{\bfseries}{.}{.5em}{\thmnote{#3's }#1}
\theoremstyle{named}

\makeatletter
\newcommand\thankssymb[1]{\textsuperscript{\@fnsymbol{#1}}}
\makeatother
\begin{document} 
\title[A curvature estimate for holomophic maps]{\bf A curvature estimate for holomophic maps\\
	on open Riemann surfaces}

\subjclass[2020]{53A10, 32H30}
\keywords{value distribution theory, Gauss map, minimal surface, hyperbolicity}

\author{Yunling Chen}
\address{Academy of Mathematics and Systems Sciences, Chinese Academy of Sciences, Beijing 100190, China}
\email{chenyl25@amss.ac.cn}

\author{Dinh Tuan Huynh}

\address{Department of Mathematics, University of Education, Hue University, 34 Le Loi St., Hue City, Vietnam}
\email{dinhtuanhuynh@hueuni.edu.vn}

\begin{abstract}
	We apply the technique of jet differentials to establish a Gauss curvature estimate for an open Riemann surface $M$, equipped with a conformal metric induced from a nonconstant holomorphic map that is highly ramified over a generic hypersurface of sufficiently high degree.
\end{abstract}

\maketitle

\section{Introduction}
Let $M$ be an open Riemann surface and $n\ge 3$. Let $f = (f_1, f_2,\ldots, f_n) \colon M\rightarrow \mathbb{R}^n$ be a conformal minimal immersion. The $(1, 0)$-differential $\partial f=(\partial f_1, \ldots, \partial f_n)$ of $f$ is a nonvanishing holomorphic map satisfying the equation
\[ \sum\limits_{i=1}^n \left(\partial f_i\right)^2=0 \quad \text{ everywhere on $M$}. \]
The \emph{generalized Gauss map} $g\colon M\rightarrow Q_{n-2}\subset\mathbb{CP}^{n-1}$ of the conformal minimal immersion $f$ is then defined as
$$
g(z):=\left[\frac{\partial f_1}{\partial z}: \cdots: \frac{\partial f_n}{\partial z}\right],
$$
where $Q_{n-2}=\left\{[z_1:\ldots:z_n]\in \mathbb{CP}^{n-1}\, \bigg|\, \sum\limits_{i=1}^n z_i^2=0 \right\}$ is the hyperquadric in $\mathbb{CP}^{n-1}$ and $z=x+iy$ is a holomorphic chart on $M$. Abbreviated as \emph{the Gauss map} of the conformal minimal immersion, $g$ is indeed holomorphic \cite{Fujimoto83_1}. It has enabled to exploit complex-analytic methods to study minimal surfaces in Euclidean space, leading to many fundamental developments in the theory.

In the low-dimensional case where $M\hookrightarrow \mathbb{R}^3$, by identifying the projective conic $Q_1$ with the Riemann sphere $\mathbb{CP}^{1}$, one may regard the Gauss map $g$ as a meromorphic function on $M$. When $M$ is non-flat and complete,  many analogs of the Little Picard Theorem for $g$ were established. For example, Osserman \cite{Osserman64} proved that  the complement $\mathbb{C}\mathbb{P}^1\setminus g(M)$ is of logarithmic capacity zero in $\mathbb{C}\mathbb{P}^1$. Xavier \cite{Xavier81} improved this result by proving that this complement can contain  at most $6$ points. Sharp result was obtained by Fujimoto \cite{Fujimoto88}, who demonstrated that $g$ can avoid at most $4$ points. Furthermore, Fujimoto \cite{Fujimoto83_2} introduced the notion of {\sl non-integrated defect} for the Gauss map and established a {\sl non-integrated defect relation}, which is an analog of the classical defect relation in Nevanlinna theory and quantifies the previous Little Picard-type theorems.

Related with these results, Fujimoto \cite{Fujimoto88} proved that if the Gauss map $g$, of a conformal minimal immersion in $M\hookrightarrow \mathbb{R}^3$, omits at least 5 distinct points, then there exists a constant $C$, depending only on the omitted points, such that
\begin{equation}
	\label{curvature estimate, Fujimoto}
	|K(q)|^{\frac{1}{2}}\mathrm{d}(q)\leq C,
\end{equation}
where $K(q)$ is the Gauss curvature of $M$ at $q$ and $\mathrm{d}(q)$ is the geodesic distance from $q$ to the boundary of $M$.

Motivated by the value distribution theory for holomorphic curves, many generalizations concerning the Gauss map of conformal minimal immersion in higher dimensions have been established. Notably, Fujimoto \cite{Fujimoto90} proved that if the generalized Gauss map of a complete minimal surface $M\hookrightarrow\mathbb{R}^{n}$ omits a family of $q>\frac{n(n+1)}{2}$ hyperplanes in general position in $\mathbb{CP}^{n-1}$, then the map must be linearly degenerate. Subsequently, Ru \cite{Ru91} strengthened this result by proving that under these conditions, such generalized Gauss maps must be constant. In the quantitative direction, several higher-dimensional non-integrated defect relations were established in \cite{Fujimoto89, Fujimoto90, Fujimoto91}. Moreover, Osserman-Ru \cite{Osserman-Ru97} obtained the curvature estimate \eqref{curvature estimate, Fujimoto} for the generalized Gauss maps omitting more than $\frac{n(n+1)}{2}$ hyperplanes located in general position in $\mathbb{CP}^{n-1}$, thereby extending Fujimoto's result to higher dimensions. Recently, Chen-Li-Liu-Ru \cite{Chen-Li-Liu-Ru21} extended the curvature estimate result with the \emph{weighted conformal metric} induced from the holomorphic map $g\colon M\to \mathbb{CP}^{n-1}$. Shortly afterward, Si \cite{Quang2023} generalized their result under a weaker assumption that the Gauss map is ramified over a family of hypersurfaces in general position with sufficiently high multiplicities.

A key tool in the study of the Gauss map stems from the Second Main Theorem in value distribution theory. Generalizing the Little Picard Theorem to higher dimensions, the Kobayashi conjecture posits the hyperbolicity of a generic high-degree hypersurface in projective space, as well as of its complement. In recent decades, significant progress has been made toward this conjecture \cite{Berczi-Kirwan24, Brotbek17, Brotbek-Deng19, cadorel2024, Darondeau2016, Demailly20, DMR2010, Riedl-Yang22, Siu2015}, which also extends the scope of Second Main Theorem \cite{Huynh-Vu-Xie-2017}. Therefore, it is  natural to seek analogues of hyperbolicity results for the Gauss maps of a conformal minimal immersion of an open Riemann surface. Extending the main results of \cite{Huynh-Vu-Xie-2017} to holomorphic maps from a bounded disc in $\mathbb{C}$, the second-named author obtained the constancy of generalized Gauss map of a complete minimal surface $M\hookrightarrow\mathbb{R}^n$, omitting a generic hypersurface in $\mathbb{CP}^{n-1}$ of large enough degree. Subsequently, a quantitative result in term of a non-integrated defect relation for holomorphic maps into projective varieties was established in \cite{Cai-Ru-Yang25}.

In this paper, we aim to give a curvature estimate for a holomorphic map from an open Riemann surface $M$ to $\mathbb{CP}^n$, ramified over a generic hypersurface of large degree with sufficiently high multiplicity. First, we prove that the Gauss curvature estimate \eqref{curvature estimate, Fujimoto} can be deduced from the existence of a suitable logarithmic jet differential.

\begin{thm}
	\label{gauss curvature estimate}
	Let $M$ be an open Riemann surface and let $f\colon M\rightarrow\mathbb{CP}^n$ be a non-constant holomorphic map. For a positive integer $p$, consider the conformal metric on $M$ given by
	\[
	\mathrm{d}s^2=\|F\|^{2p}|\omega|^2,
	\]
	where $F$ is a reduced representation of $f$ and $\omega$ is a holomorphic $1$--form on $M$. Let $D\subset\mathbb{CP}^n$ be a generic Kobayashi hyperbolic hypersurface of degree $d$ such that $f(M)\not\subset D$.
	Suppose that for two positive integers $m,\tilde{m}$ with $\tilde{m}>2pm$, the subset
	\[
	\bs	\big(
	E_{n,m}T_{\mathbb{C}\mathbb{P}^n}^*(\log D)
	\otimes
	\mathcal{O}_{\mathbb{C}\mathbb{P}^n}(1)^{-\widetilde{m}}
	\big)\cap \left(\mathbb{C}\mathbb{P}^n\setminus D\right)
	\]
	has dimension at most zero. Assume also that $f$ is ramified over $D$ with multiplicity at least $\mu$ with
	\[
	\dfrac{1}{\mu}<\dfrac{1}{d}\bigg(\dfrac{\widetilde{m}}{m}-2p\bigg).
	\] 
	Then there exists a positive constant $C$ depending only on $D$ such that
	\begin{equation}\label{eq:curvature-main}
		|K(q)|^{\frac{1}{2}}\mathrm{d}(q)\leq C,
	\end{equation}
	where $K(q)$ is the Gauss curvature of $M$ at $q$ with respect to the metric $\mathrm{d}s^2$ and $\mathrm{d}(q)$ is the geodesic distance from $q$ to the boundary of $M$ in the same metric.
\end{thm}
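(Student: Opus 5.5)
We follow the Fujimoto / Osserman--Ru scheme. Everything reduces to one analytic ingredient: a conformal flat metric on a disc-like neighbourhood of each point, built from the jet differential, whose curvature is bounded above by a negative constant. Fix $q\in M$. Passing to the universal cover if needed, and using that the estimate is local and invariant under isometries, we may assume $M$ is simply connected and non-compact. By the uniformization theorem we may also assume $q$ corresponds to $0$ in a disc $\Delta_R=\{|z|<R\}$ with $0<R\le\infty$. We write $\omega=h\,dz$ with $h$ holomorphic and nowhere zero (zeros of $\omega$ would make $ds^2$ degenerate, so $h\neq 0$). Then
\[
ds^2=\|F\|^{2p}|h|^2|dz|^2,\qquad K=-\frac{2\,\Delta\log(\|F\|^{p}|h|)}{\|F\|^{2p}|h|^2}=-\frac{p\,\Delta\log\|F\|^2}{\|F\|^{2p}|h|^2}.
\]
Here $\Delta\log\|F\|^2$ is the pull-back of the Fubini--Study form, so $|K|=p\,\|F\|^{-2p-2}|F\wedge F'|^2/|h|^2$, up to a normalizing constant.

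\textbf{Step 1: from the base locus to a nonzero jet expression.} Take a nonzero global section $P\in H^0(\mathbb{CP}^n,E_{n,m}T^*(\log D)\otimes\mathcal{O}(1)^{-\widetilde m})$ with $P(j_n f)\not\equiv 0$. The hypothesis that the base locus meets $\mathbb{CP}^n\setminus D$ in a finite set, together with $f(M)\not\subset D$ and $f$ non-constant, lets us choose such a $P$: since $f(M)$ is not contained in that finite set, a generic section does not vanish on $j_nf$. Otherwise the fundamental vanishing theorem for logarithmic jet differentials with negative twist would force degeneracy, but $D$ is hyperbolic.

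In homogeneous terms, $P(j_nF)$ is a function $\Phi$ on $\Delta_R$. It is a polynomial of weighted degree $m$ in derivatives of $\log(Q(F))$ and of the coordinates of $F$, where $Q$ defines $D$, and it is multiplied by a form of degree $\widetilde m$, so that $\Phi/\|F\|^{\widetilde m}$-type quantities are well defined. The expression $\Phi\cdot Q(F)^{\widetilde m/d}$ (suitably) is holomorphic except for poles along $f^{-1}(D)$. The logarithmic pole order of a weighted degree $m$ jet differential at a point of multiplicity $\ge\mu$ is at most $m$, while $Q(F)^{\widetilde m /d}$ vanishes to order $\ge \mu\widetilde m/d$. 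Hence
\[
G:=\Phi\cdot Q(F)^{\,\widetilde m/d}\cdot(\text{normalizing power})
\]
has zeros whose orders exceed those produced by the metric weight. This is exactly what $\frac1\mu<\frac1d(\frac{\widetilde m}{m}-2p)$ encodes.

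\textbf{Step 2: construct the auxiliary flat metric.} Following Fujimoto, we define on $\Delta_R$ minus a discrete set
\[
d\tau^2=\Big(\frac{|h|^{?}\cdot |\Phi|^{?}}{\|F\|^{?}\,\cdot(\text{terms in }Q(F))}\Big)^{2/(\cdots)}|dz|^2,
\]
with exponents chosen so that the metric is of the form $|g|^{2\lambda}|dz|^2$ with $g$ holomorphic and nonvanishing. The nonvanishing uses the multiplicity inequality. Being flat, $d\tau^2$ is complete only if $R=\infty$ along suitable divergent paths. The standard argument then applies: take the maximal disc on which a local isometry to $(\mathbb{C},|dw|^2)$ is defined, and find a divergent path $\gamma$ of finite $d\tau$-length.

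\textbf{Step 3: the Schwarz--Pick type key estimate.} The core claim is
\[
\|F\|^{p}|h|\,\Big(\frac{\|F\|^{\cdot}\ |F\wedge F'|}{\ldots}\Big)\le C\,\frac{2R}{R^2-|z|^2}
\]
in the $\tau$-flat coordinate. It is derived from the logarithmic derivative lemma style inequality for jet differentials: $\Delta\log$ of an appropriate product is $\ge c\cdot(\text{metric})$, obtained via Ahlfors--Schwarz with the negatively curved pseudo-metric built from $|P(j_nF)|^{2/m}\|F\|^{-2\widetilde m/m}$. The strict inequality $\widetilde m>2pm$ gives curvature $\le -c<0$. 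Combining with $d(q)\le$ the $ds$-length of $\gamma$, bounded by the integral of the right-hand side over a radius, yields $|K(q)|^{1/2}d(q)\le C$.

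I expect the main obstacle to be Step 3. We must verify that the pseudo-metric from the jet differential has curvature bounded above by a negative constant independent of $f$. This requires a uniform bound on $|P(j_n F)|$ in terms of derivatives, the analogue of $|F\wedge F'|$ appearing in $K$, and uniformity comes only from the compactness of $\mathbb{CP}^n$ and $D$. Matching the exponents so that the constant depends only on $D$ is the delicate bookkeeping.
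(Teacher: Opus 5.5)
Your proposal has a genuine gap at Step 3. That step is not bookkeeping; it is the whole proof, and as set up it does not go through.

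\textbf{Why Step 3 fails.} An Ahlfors--Schwarz argument needs a pseudo-metric $\gamma_0|dz|^2$ with $\Delta\log\gamma_0\ge c\,\gamma_0$, where $c$ is independent of $f$. Take $\gamma_0=|\mathscr P(j_nF)|^{2/m}$, normed via the twist. The twist contributes to $\Delta\log\gamma_0$ only a multiple of the Fubini--Study density $\|F\wedge F'\|^2/\|F\|^4$. So you would need $|\mathscr P(j_nF)|^{2/m}\le C\,\|F\wedge F'\|^2/\|F\|^4$ pointwise, and nothing guarantees this for a jet differential of order $n\ge 2$:
\begin{itemize}
\item $\mathscr P(j_nF)$ involves $F'',\dots,F^{(n)}$, which are not controlled by $F'$ at a point. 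Compactness of $\mathbb{CP}^n$ gives such a bound only for symmetric differentials ($k=1$, non-logarithmic case).
\item $\mathscr P(j_nF)$ has poles on $f^{-1}(D)$, so $\log\gamma_0$ is not subharmonic there. The ramification hypothesis controls these poles only as divisors, which is an integrated statement, not a curvature inequality.
\end{itemize}
This is why higher-order logarithmic jet differentials are exploited through the logarithmic derivative lemma rather than a Schwarz lemma. No analogue of Fujimoto's key lemma, which rests on the explicit Wronskian/contact-function structure for hyperplanes, is available for a general $\mathscr P$. Step 2 has undetermined exponents and depends on Step 3, so nothing in your argument produces a constant $C$ depending only on $D$.

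A smaller point on Step 1: the fundamental vanishing theorem is irrelevant there. It concerns entire curves in $\mathbb{CP}^n\setminus D$, not discs meeting $D$. The section with $\mathscr P(j_nf)\not\equiv 0$ must come from the base-locus hypothesis.

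\textbf{The missing idea.} Use $\mathscr P$ only in integrated form, and obtain uniformity by contradiction and compactness. The paper proceeds as follows.
\begin{enumerate}
\item The Second Main Theorem on $\Delta_R$, via the logarithmic derivative lemma, gives the ramification bound $\mu\le md/\widetilde m$. This excludes $M=\mathbb C$. Combined with the Aladro--Krantz rescaling lemma, Hurwitz, and hyperbolicity of $D$, it also gives normality of the family of ramified maps.
\item On complete surfaces one gets $\delta_1^f(D)\le 1-\frac1d(\frac{\widetilde m}{m}-2p)$. Form the subharmonic function $2w+t\bigl(\log|\mathscr P(j_nf)|/\|f\|^{\widetilde m}+mu\bigr)$ with $t=2p/(\widetilde m-\eta dm)$, so that $mt<1$. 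Bound $\int|\mathscr P(j_nf)|^t\,d\theta$ by the Cai--Ru--Yang lemma, and contradict Yau's $L^1$ theorem.
\item For the curvature estimate, assume $|K_i(p_i)|\,\mathrm d_i(p_i)^2\to\infty$. Renormalize to geodesic discs of radius $R_i\to\infty$ with $K_i(0)=-\frac14$ and $-1\le K_i\le 0$. Extract a limit $g$ using normality and Chen--Li--Liu--Ru's Proposition 1.
\item If $g$ is nonconstant, its metric is complete by Osserman--Ru, and Hurwitz then contradicts step 2.
\item If $g$ is constant, the $g_i$ eventually omit a neighbourhood of a hyperplane on a fixed disc. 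Chen--Li--Liu--Ru's Theorem 3 then forces $\mathrm d_i(r)\le 2C$, while the Osserman--Ru distance comparison gives $\mathrm d_i(r)\ge R>2C$.
\end{enumerate}
To keep your direct approach you would have to prove the missing pointwise Schwarz lemma. Otherwise, switch to this indirect scheme.
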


Based on the recent progress about the Green-Griffiths jet bundle, we show that the assumption of Theorem \ref{gauss curvature estimate} is satisfied for a generic hypersurface of large degree.

\begin{cor}
	\label{cor of gauss curvature estimate}
	Let $M$ be an open Riemann surface and let $f\colon M\rightarrow\mathbb{CP}^n$ be a non-constant holomorphic map. For a positive integer $p$, consider the conformal metric on $M$ given by
	\[
	\mathrm{d}s^2=\|F\|^{2p}|\omega|^2,
	\]
	where $F$ is a reduced representation of $f$ and $\omega$ is a holomorphic $1$--form on $M$. 	Let $D\subset\mathbb{CP}^n$ be a generic hypersurface having degree
	\begin{equation*}
		d\geq d_{n,p}=
		\begin{cases}
			16n(p + 5n -2)\left(\sqrt{2n}\, \frac{\log\log(2n)}{2}+3\right)^{2n},\quad \text{for }\,2\leq n\leq 39;\\
			\left(1+\frac{2p-5}{10n+1}\right)\left(n\log n\right)^n,\quad \text{whenever\, $n\geq 40$}
		\end{cases}
	\end{equation*}
	such that $f(M)\not\subset D$. 
	Suppose that $f$ is ramified over $D$ with multiplicity at least $\mu$, where ${\mu}>{d}.$ 
	Then there exists a positive constant $C$ depending only on $D$ such that, for each $q\in M$, the curvature estimate \eqref{eq:curvature-main} holds.
\end{cor}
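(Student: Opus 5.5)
The corollary follows from Theorem \ref{gauss curvature estimate} once we produce, for a generic $D$ of degree $d\ge d_{n,p}$, positive integers $m,\widetilde m$ satisfying three conditions: $\widetilde m>2pm$; the base locus of $E_{n,m}T^*_{\mathbb{CP}^n}(\log D)\otimes\mathcal{O}_{\mathbb{CP}^n}(1)^{-\widetilde m}$ meets $\mathbb{CP}^n\setminus D$ in a set of dimension at most zero; and $\frac{d}{\mu}+2p<\frac{\widetilde m}{m}$. Since $\mu>d$ gives $d/\mu<1$, it suffices to find such $m,\widetilde m$ with
\[
\frac{\widetilde m}{m}\ \ge\ 2p+1 .
\]
This inequality also implies $\widetilde m>2pm$. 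Kobayashi hyperbolicity of a generic $D$ in this degree range holds by the known results cited in the introduction (Brotbek, Deng, Demailly, Riedl--Yang, etc.), since $d_{n,p}$ exceeds those degree bounds.

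The main input is the existence, on the logarithmic pair $(\mathbb{CP}^n,D)$, of many logarithmic Green--Griffiths jet differentials with negative twist whose base locus is small away from $D$. For the first range I plan to invoke the construction of Brotbek--Deng type (as refined by Demailly and by Huynh--Vu--Xie for complements). For a generic $D$ of degree $d$ they give, for suitable $k$ and $m$, global sections of $E_{k,m}T^*(\log D)\otimes\mathcal{O}(-\widetilde m)$ whose common zero set on $\mathbb{CP}^n\setminus D$ (pulled back to the jet space, off the singular jets) has dimension $\le 0$. Along the way I will check that the paper's $E_{n,m}$ means jet order $k=n$, matching the Brotbek--Deng setting. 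These constructions use a Fermat-type deformation $D=\{\sum a_I z^{I}\,z_I^{\,\varepsilon}\cdots\}$ and produce a twist ratio $\widetilde m/m$ that grows with $d$. I will track the explicit ratio, which is of the form $\widetilde m/m\approx \frac{d}{c_n}-(\text{const})$ with $c_n$ the effective constant $16n(5n-2)(\sqrt{2n}\,\tfrac{\log\log 2n}{2}+3)^{2n}$-type coming from Deng's effective bound. The condition $\widetilde m/m\ge 2p+1$ then translates into $d\ge 16n(p+5n-2)(\cdots)^{2n}$; this is how the $p$-dependence enters $d_{n,p}$. For $n\ge 40$ I would instead use the Riedl--Yang / Bérczi--Kirwan style bound $d\gtrsim (n\log n)^n$. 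There the ratio $\widetilde m/m$ is proportional to $d/(n\log n)^n$ minus a fixed amount, and solving gives the factor $\bigl(1+\frac{2p-5}{10n+1}\bigr)$.

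The main obstacle is the bookkeeping in this second step. The cited existence theorems are usually stated with a specific twist such as $\mathcal{O}(-1)$ or a fixed ratio, so one must redo the twist computation with a free parameter. This means taking the weighted degree counts (e.g.\ Demailly's $\widetilde m \sim m\,(d/\delta-\text{const})$ from the Brotbek construction with sections $\sigma^{\,\varepsilon}$) and pushing the slack into the exponent $\widetilde m$ while preserving the base-locus dimension statement on the complement. I would first try to read off the twist as a linear function of $d$ from the explicit Wronskian-type sections, and then substitute the threshold $d_{n,p}$ to verify $\widetilde m/m\ge 2p+1$. Once this holds, Theorem \ref{gauss curvature estimate} applies with $\mu>d$ and gives \eqref{eq:curvature-main}, with $C$ depending only on $D$.
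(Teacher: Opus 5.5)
Your reduction is exactly the paper's proof. The paper applies Theorem~\ref{gauss curvature estimate} with $\widetilde m=(2p+1)m$. Your observation that $\mu>d$ gives $\frac{d}{\mu}+2p<2p+1\le\frac{\widetilde m}{m}$, which also yields $\widetilde m>2pm$, is all that is needed there. You also note that the hyperbolicity hypothesis of Theorem~\ref{gauss curvature estimate} must come from known results for generic hypersurfaces of high degree. The paper leaves this implicit, and your handling of it is fine.

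The gap is in the step you call the main obstacle. It does not need to be redone. Corollary~\ref{cor of ramification gauss maps} already gives, for generic $D$ with $d\ge d_{n,p}$, integers $m\gg1$ and $\widetilde m\ge(2p+1)m$ such that the base locus meets $\mathbb{CP}^n\setminus D$ in dimension at most zero. The corollary follows by citing it.

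Moreover, your sketch of that step would not reproduce the stated threshold. The constant $d_{n,p}$ does not come from a Brotbek--Deng/Fermat-type construction, nor from a twist of the form $\widetilde m/m\approx d/c_n-\mathrm{const}$. The paper's route is as follows:
\begin{itemize}
\item Riedl--Yang's Theorem~3.6 reduces the zero-dimensional base-locus statement on $\mathbb{CP}^n$ to a codimension-one statement for the universal hypersurface in $\mathbb{CP}^{2n-1}$. This is the source of the exponent $2n$ and the factor $16n\ge 8(2n-1)$.
\item Merker--Ta's effective computation is run on $\mathbb{CP}^{2n-1}$, with the twist fixed in advance as $c=2p+5(2n-1)-1$.
\item Differentiating with Darondeau's slanted vector fields of pole order $5(2n-1)-2$ costs at most $(5(2n-1)-2)m$ of twist, leaving $\widetilde m\ge(2p+1)m$.
\item The degree threshold is then the largest root of $d^{2n-1}I_0+\cdots+I_{2n-1}$. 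Fujiwara's bound controls it by $8(2n-1)\frac{c+2}{2}(l+3)^{2n-1}$, with $\frac{c+2}{2}=p+5n-2$.
\end{itemize}
The $n\ge 40$ formula is not a separate Riedl--Yang or B\'erczi--Kirwan input. It is the same bound rewritten, using $\frac{16n(p+5n-2)}{80n^2+8n}=1+\frac{2p-5}{10n+1}$ and Merker--Ta's estimate $C_n<1$.

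As written, your second step is a plan whose mechanism does not match where the constants come from. Replace it with a citation of Corollary~\ref{cor of ramification gauss maps}.
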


We now outline some key ideas in the proof of Theorem~\ref{gauss curvature estimate}. Our aim is to establish the curvature estimate \eqref{eq:curvature-main} without assuming the completeness of the metric $\mathrm{d}s^2$. To this end, following the approach in \cite{Chen-Li-Liu-Ru21} and \cite{Quang2023}, we need two main ingredients derived from suitable logarithmic jet differentials along a generic hypersurface $D$: a non-integrated defect relation for nonconstant holomorphic maps from an open Riemann surface with respect to $D$, and a normality criterion for families of holomorphic maps from the unit disc that are ramified over $D$. The first result, concerning the non-integrated defect relation (see Theorem~\ref{defect gauss map}), is obtained by adapting similar arguments as in the proof of \cite[Theorem 1.5]{Cai-Ru-Yang25}. This extends Fujimoto's technique \cite{Fujimoto83_2} from the special jet differential built upon Wronskian  to the general logarithmic jet differentials. The second result, a normality criterion (see Theorem~\ref{normal family}), follows from a variant of the Second Main Theorem for logarithmic jet differentials established in \cite{Huynh-Vu-Xie-2017}, where the source of the holomorphic maps is taken to be the unit disc. Finally, with the above two results in hand, Theorem~\ref{gauss curvature estimate} is proved by following similar arguments as in \cite{Chen-Li-Liu-Ru21} (see also \cite{Quang2023}).

\section{Second Main Theorem from logarithmic jet differentials}
\subsection{Logarithmic jet differentials}
Let $X$ be a complex projective variety of dimension $n$. For a point $x\in X$, consider the holomorphic germs $(\mathbb{C},0)\rightarrow (X,x)$. Two such germs are said to be equivalent if they have the same Taylor expansion up to order $k$ in some local coordinates around $x$. The equivalence class of an analytic germ $f\colon (\mathbb{C},0)\rightarrow (X,x)$ is called the {\sl $k$-jet of $f$}, denoted by $j_k(f)$, which is independent of the choice of local coordinates. A $k$-jet $j_k(f)$ is said to be {\sl regular} if $\dif f(0)\not=0$. For a given point $x\in X$, denote by $j_k(X)_x$ the vector space of all $k$-jets of analytic germs $(\mathbb{C},0)\rightarrow (X,x)$, set
\[
J_k(X)
:=
\underset{x\in X}{\medcup}\,J_k(X)_x,
\]
and consider the natural projection
\[
\pi_k\colon J_k(X)\rightarrow X.
\]
Then $J_k(X)$ carries the structure of a holomorphic fiber bundle over $X$, which is called the {\sl $k$-jet bundle over $X$}. In general, $J_k(X)$ is not a vector bundle. When $k=1$, the $1$-jet bundle $J_1(X)$ is canonically isomorphic to the tangent bundle $T_X$.

For an open subset $U\subset X$, a section $\omega\in H^0(U,T_X^*)$, and a $k$-jet $j_k(f)\in J_k(X)|_U$, the pullback $f^*\omega$ is of the form $A(z)\dif z$ for some analytic function $A$ on $U$. Since each derivative $A^{(j)}$ ($0\leq j\leq k-1$) is well-defined, independent of the representation of $f$ in the class $j_k(f)$, the analytic $1$-form $\omega$ induces the holomorphic map
\begin{align}
	\label{trivialization-jet}
	\tilde{\omega}
	\colon
	J_k(X)|_U &
	\rightarrow
	\mathbb{C}^k;\nonumber\\
	j_k(f)(z)&\mapsto
	\big(A(z),A(z)^{(1)},\dots,A(z)^{(k-1)}
	\big).
\end{align}
Hence on an open subset $U$, a local holomorphic coframe $\omega_1\wedge\dots\wedge\omega_n\not=0$ yields a trivialization 
\[
H^0(U, J_k(X))\rightarrow U\times(\mathbb{C}^k)^n,
\]
with the following new $n k$ independent coordinates:
\[
\sigma\mapsto(\pi_k\circ\sigma;\tilde{\omega}_1\circ\sigma,\dots,\tilde{\omega}_n\circ\sigma),
\]
where $\tilde{\omega}_i$ are defined as in \eqref{trivialization-jet}. The components $x_i^{(j)}$ ($1\leq i\leq n$, $1\leq j\leq k$) of $\tilde{\omega}_i\circ\sigma$ are called the {\sl jet-coordinates}.
In a more general setting, where $\omega$ is a section over $U$ of the sheaf of meromorphic $1$-forms, the induced map $\tilde{\omega}$ is meromorphic.

Now, suppose that $D\subset X$ is a normal crossing divisor on $X$. This means that at each point $x\in X$, there exist some local coordinates $z_1,\dots,z_{\ell},z_{\ell+1},\dots,z_n$ ($\ell=\ell(x)$) centered at $x$, such that $D$ is defined by
\[
D=
\{z_1\dots z_{\ell}=0
\}.
\]
Following Iitaka \cite{Itaka82}, the {\sl logarithmic cotangent bundle of $X$ along $D$}, denoted by $T_X^*(\log D)$, corresponds to the locally free sheaf generated by
\[
\dfrac{\dif\!z_1}{z_1},\dots,\dfrac{\dif\! z_{\ell}}{z_{\ell}},z_{\ell +1},\dots,z_n.
\]

A holomorphic section $s\in H^0(U,J_k(X))$ over an open subset $U\subset X$ is said to be a {\sl logarithmic $k$-jet field} if $\tilde{\omega}\circ s$ is analytic for any section $\omega\in H^0(U',T_X^*(\log D))$ and any open subsets $U'\subset U$. Such logarithmic $k$-jet fields define a subsheaf of $J_k(X)$, which is a sheaf of sections of a holomorphic fiber bundle over $X$. Such bundle is called the {\sl logarithmic $k$-jet bundle over $X$ along $D$}, denoted by $J_k(X,-\log D)$ (see \cite{Noguchi1986}).


A {\sl logarithmic jet differential of {\sl order} $k$ and {\sl degree}} $m$ at a point $x\in X$ is a polynomial $Q(f^{(1)},\dots,f^{(k)})$ on the fiber over $x$ of $J_k(X,-\log D)$ enjoying the weighted homogeneity:
\[
Q(j_k(f\circ{\lambda}))
=
\lambda^m
Q(j_k(f))
\eqno\scriptstyle{(\lambda\,\in\,\mathbb{C}^*)}.
\]
Consider the symbols
\begin{align*}
	\dif^{j}\log z_i\, {\scriptstyle{(1\,\leq\, j\,\leq\, k,\,1\,\leq\, i\,\leq\,\ell)}} \quad\text{and}\quad {\dif^{j}z_i\, {\scriptstyle{(1\,\leq\, j\,\leq\, k,\,\ell\,+\,1\,\leq\, i\,\leq\,n)}}}.  
\end{align*}
Set the weight of $\dif^{j}\log z_i$ or $\dif^{j}z_i$ to be $j$. Then
a logarithmic jet differential of order $k$ and weight $m$ along $D$ at $x$ is a weighted homogeneous polynomial of degree $m$ in the variables given by the symbols above. Denote by $E_{k,m}^{GG}T_X^*(\log D)_x$ the vector space spanned by such polynomials and set
\[
E_{k,m}^{GG}T_X^*(\log D)
:=
\underset{x\in X}{\medcup}\,
E_{k,m}^{GG}T_X^*(\log D)_x.
\]
By Fa\`{a} di bruno's formula \cite{Constantine1996,Merker2015}, one can check that $E_{k,m}^{GG}T_X^*(\log D)$ carries the structure of a vector bundle over $X$, called {\sl logarithmic Green-Griffiths vector bundle} \cite{Green-Griffiths1980}. A global section of $E_{k,m}^{GG}T_X^*(\log D)$ is called a {\sl logarithmic jet differential} of order $k$ and weight $m$ along $D$. Locally, a logarithmic jet differential form can be written as

{\footnotesize
	\begin{equation}
		\label{local expression of log jet}
		\underset{|\alpha_1|+2|\alpha_2|+\dots+k|\alpha_k|=m}
		{\sum_{\alpha_1,\dots,\alpha_k\in\mathbb{N}^n}}
		A_{\alpha_1,\dots,\alpha_k}
		\bigg(
		\prod_{i=1}^{\ell}
		\big(
		\dif\log z_i
		\big)^{\alpha_{1,i}}
		\prod_{i=\ell+1}^{n}
		\big(
		\dif z_i
		\big)
		^{\alpha_{1,i}}
		\bigg)
		\dots
		\bigg(
		\prod_{i=1}^{\ell}
		\big(
		\dif^k\log z_i
		\big)^{\alpha_{k,i}}
		\prod_{i=\ell+1}^{n}
		\big(
		\dif^kz_i
		\big)
		^{\alpha_{k,i}}
		\bigg),
	\end{equation}
}
where $ \alpha_{\lambda}
=
(\alpha_{\lambda,1},\dots,\alpha_{\lambda,n})
\in\mathbb{N}^n
{\scriptstyle{(1\,\leq\,\lambda\,\leq\, k)}}$
is the multi-indices of length $|\alpha_{\lambda}|
=
\sum_{1\leq i\leq n}
\alpha_{\lambda,i},$
and where $A_{\alpha_1,\dots,\alpha_k}$ are locally defined holomorphic functions. 

Assigning the weight $s$ for $\frac{\dif^s z_i}{z_i}$, one can rewrite $\dif^j\log z_i$ as an isobaric polynomial of variables $\frac{\dif^s z_i}{z_i}\, (1\leq s\leq j)$ with integer coefficients, namely
$$
\dif^j\log z_i
=
\underset{\beta_1+2\beta_2+\dots+j\beta_j=j}
{\sum_{\beta=(\beta_1,\dots,\beta_j)\in\mathbb{N}^j}}
b_{j\beta}\bigg(\dfrac{\dif z_i}{z_i}\bigg)^{\beta_1}\dots\bigg(\dfrac{\dif^j z_i}{z_i}\bigg)^{\beta_j},
$$
where $b_{j\beta}\in\mathbb{Z}$. Conversely, one can also express $\frac{\dif^j z_i}{z_i}$ as an isobaric polynomial of weight $j$ of variables $\dif^s\log z_i$ ($1\leq s\leq j$) with integer coefficients \cite{Brotbek-Deng19}. Thus one can also use the following trivialization of logarithmic jet differentials:
{\footnotesize
	\begin{equation}
		\label{local expression of log jet, second form}
		\underset{|\beta_1|+2|\beta_2|+\dots+k|\beta_k|=m}
		{\sum_{\beta_1,\dots,\beta_k\in\mathbb{N}^n}}
		B_{\beta_1,\dots,\beta_k}
		\bigg(
		\prod_{i=1}^{\ell}
		\big(
		\frac{\dif z_i}{z_i}\big)^{\beta_{1,i}}
		\prod_{i=\ell+1}^{n}
		\big(
		\dif z_i
		\big)
		^{\beta_{1,i}}
		\bigg)
		\dots
		\bigg(
		\prod_{i=1}^{\ell}
		\big(\dfrac{
			\dif^k z_i}{z_i}
		\big)^{\beta_{k,i}}
		\prod_{i=\ell+1}^{n}
		\big(
		\dif^kz_i
		\big)
		^{\beta_{k,i}}
		\bigg),
	\end{equation}
}
where $\beta_{\lambda}
=
(\beta_{\lambda,1},\dots,\beta_{\lambda,n})
\in\mathbb{N}^n
{\scriptstyle{(1\,\leq\,\lambda\,\leq\, k)}}$
is multi-indices of length 
$
|\beta_{\lambda}|
=
\sum_{1\leq i\leq n}
\beta_{\lambda,i},
$
and where $B_{\beta_1,\dots,\beta_k}$ are locally defined holomorphic functions. 

Demailly \cite{Demailly97} refined the Green-Griffiths' theory and considered the sub-bundle $E_{k,m}T^*_X(\log D)$, whose sections are logarithmic jet differentials that are invariant under arbitrary reparametrization of the source $\mathbb{C}$. More precisely, for any $\phi\colon \left(\mathbb{C},0\right)\to \left(\mathbb{C},0\right)$, a germ of $k$-jets biholomorphisms, the fibers of  $E_{k,m}T^*_X(\log D)$ satisfies that
\[Q(j_k(f\circ{\phi}))
=
\left(\phi^\prime (0)\right)^m
Q(j_k(f))
.\]

The bundles $E_{k,m}^{GG}T_X^*(\log D)$ and $E_{k,m}T_X^*(\log D)$ are fundamental tools in studying (logarithmic) hyperbolicity conjecture. By the fundamental vanishing theorem \cite{Demailly97, Siu2015}, for any ample line bundle $\mathcal{A}$ on $X$, a non-trivial global section of $E_{k,m}^{GG}T_X^*(\log D)\otimes\mathcal{A}^{-1}$ (or $E_{k,m}T_X^*(\log D)\otimes\mathcal{A}^{-1}$) gives a corresponding algebraic differential equation that all entire curve $f\colon\mathbb{C}\rightarrow X\setminus D$ must satisfy. That is, any entire curve $f\colon\mathbb{C}\rightarrow X\setminus D$ is contained in the base locus of $E_{k,m}^{GG}T_X^*(\log D)\otimes\mathcal{A}^{-1}$ (or $E_{k,m}T_X^*(\log D)\otimes\mathcal{A}^{-1}$). However, despite many efforts, controlling the base locus remains feasible only when $X=\mathbb{CP}^n$ and the degree of hypersurface $D$ must be sufficiently large (see \cite{Berczi-Kirwan24, Brotbek-Deng19, Demailly20, DMR2010, Siu2015}).
\subsection{Second Main Theorem} 
As Bloch’s famous dictum ``{\it Nothing exists in the infinite plane that has not been previously done in the finite disc}''  \cite{Bloch26}, we study the Second Main Theorem for holomorphic maps from a bounded disc $\Delta_R:=\{z\in\mathbb{C}:|z|<R\}$, with $0<R< \infty$. 
Let $E=\sum\limits_i\alpha_i\,a_i$ be an effective divisor in $\Delta_R$, where $\alpha_i\geq 0$ and $a_i\in\Delta_R$ for all $i$. Let $k\in \mathbb{N}\cup\{\infty\}$. For any $0<t<R$, we denote by
\[
n^{[k]}(t,E)
:=
\sum_{a_i\in\Delta_t\cap E}
\min
\,
\{k,\alpha_i\},
\]
the $k$-truncated degree of $E$ on the disc $\Delta_t$.
Then for $0\,<\,r\,<R$, the \textsl{truncated counting function at level} $k$ of $E$ is defined via the logarithmic average:
\[
N^{[k]}(r,E)
\,
:=
\,
\int_0^r \frac{n^{[k]}(t, E)}{t}\,\dif\! t.
\]
When $k=\infty$, we omit the superscript and write simply $n(t,E)$ and $N(r,E)$ in place of $n^{[\infty]}(t,E)$ and $N^{[\infty]}(r,E)$. 

Let $f\colon\Delta_R\rightarrow \mathbb{C}\mathbb{P}^n$ be a holomorphic disc, having a reduced representation $f=[f_0:\cdots:f_n]$ in the homogeneous coordinates $[z_0:\cdots:z_n]$ of $\mathbb{C}\mathbb{P}^n$. Let $D=\{Q=0\}$ be a divisor in $\mathbb{C}\mathbb{P}^n$ defined by a homogeneous polynomial $Q\in\mathbb{C}[z_0,\dots,z_n]$ of degree $d\geq 1$. If $f(\Delta_R)\not\subset D$, we define the \textsl{$k$-truncated counting function} of $f$ with respect to $D$ as
\[
N_f^{[k]}(r,D)
\,
:=
\,
N^{[k]}\big(r,(Q\circ f)_0\big),
\]
where $(Q\circ f)_0$ denotes the zero divisor of $Q\circ f$ in $\Delta_R$.

The \textsl{proximity function} of $f$ for the divisor $D$ is defined as
\[
m_f(r,D)
\,
:=
\,
\int_0^{2\pi}
\log
\frac{\big\Vert f(re^{i\theta})\big\Vert^d\,
	\Vert Q\Vert}{\big|Q(f)(re^{i\theta})\big|}
\,
\frac{\dif\!\theta}{2\pi},
\]
where $\Vert Q\Vert$ is the maximum  absolute value of the coefficients of $Q$ and
\[
\big\Vert f(z)\big\Vert
\,
\coloneq
\,
\max
\,
\{|f_0(z)|,\ldots,|f_n(z)|\}.
\]
Since the Cauchy's inequality $\big|Q(f)\big|\leq \Vert Q\Vert\cdot\Vert f\Vert^d$, one has $m_f(r,D)\geq 0$.

Lastly, the \textsl{Cartan order function} of $f$ is defined by
\begin{align*}
	T_f(r)
	\,
	:&=
	\,
	\frac{1}{2\pi}\int_0^{2\pi}
	\log
	\big\Vert f(re^{i\theta})\big\Vert \dif\!\theta.
\end{align*}

With above notations, Nevanlinna theory consists of the following two fundamental theorems (for comprehensive presentations, see  \cite{Noguchi-Winkelmann2014,Ru21}).

\begin{thm}[First Main Theorem]\label{fmt} Let $f\colon\Delta_R\rightarrow \mathbb{C}\mathbb{P}^n$ be a holomorphic curve and let $D\subset \mathbb{C}\mathbb{P}^n$ be a hypersurface of degree $d$ such that $f(\Delta_R)\not\subset D$. Then for every $0<r<R$, one has
	\[
	m_f(r,D)
	+
	N_f(r,D)
	\,
	=
	\,
	d\,T_f(r)
	+
	O(1).
	\]
	Consequently, one obtains the Nevanlinna's inequality:
	\begin{equation}
		\label{-fmt-inequality}
		N_f(r,D)
		\,
		\leq
		\,
		d\,T_f(r)+O(1).
	\end{equation}
\end{thm}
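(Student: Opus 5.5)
The plan is the classical Jensen-formula argument, applied to the single function $Q\circ F$, where $F=(f_0,\dots,f_n)$ is a reduced representation of $f$ on $\Delta_R$. Since $f(\Delta_R)\not\subset D$, the function $Q(F)$ is holomorphic on $\Delta_R$ and not identically zero. I will assume, as one may after rotating slightly or adding an $O(1)$ correction, that $Q(F)(0)\neq 0$. The case $Q(F)(0)=0$ is handled in the standard way: apply Jensen to $Q(F)(z)/z^{\nu}$ and absorb the resulting term $\nu\log r$ into the counting function, following the usual convention for $N(r,E)$ at the origin.

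\emph{Step 1.} For $0<r<R$, Jensen's formula applied to $Q(F)$ on $\overline{\Delta_r}$ gives
\[
\int_0^{2\pi}\log|Q(F)(re^{i\theta})|\frac{\dif\theta}{2\pi}
= \log|Q(F)(0)| + \sum_{a\in\Delta_r,\,Q(F)(a)=0}\operatorname{ord}_a Q(F)\,\log\frac{r}{|a|}.
\]
Integrating by parts (Stieltjes form), the sum equals $\int_0^r \frac{n(t,(Q\circ F)_0)}{t}\dif t = N_f(r,D)$.

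\emph{Step 2.} Write
\[
\log|Q(F)| = d\log\|F\| + \log\|Q\| - \log\frac{\|F\|^d\|Q\|}{|Q(F)|}.
\]
Integrating over the circle of radius $r$ gives
\[
\int\log|Q(F)|\frac{\dif\theta}{2\pi} = d\,T_f(r) + \log\|Q\| - m_f(r,D).
\]

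\emph{Step 3.} Comparing Steps 1 and 2 yields
\[
m_f(r,D)+N_f(r,D) = d\,T_f(r) + \log\|Q\| - \log|Q(F)(0)|,
\]
and the last two terms form a constant independent of $r$, which is the $O(1)$ in the statement. The inequality \eqref{-fmt-inequality} then follows from $m_f(r,D)\ge 0$, which was noted earlier as a consequence of Cauchy's inequality $|Q(F)|\le\|Q\|\,\|F\|^d$.

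Most of this is routine. The only points needing care are, first, the handling of a zero of $Q(F)$ at the origin, and second, checking that the reduced representation is well-defined up to a nonvanishing holomorphic factor $h$. Replacing $F$ by $hF$ changes both $T_f$ and $\frac1d\int\log|Q(F)|$ by the same amount $\int\log|h|\frac{\dif\theta}{2\pi}=\log|h(0)|$, because $\log|h|$ is harmonic. Hence the identity only shifts by a constant and remains consistent.
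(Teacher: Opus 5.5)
Your argument is correct and is the standard Jensen-formula proof that the paper relies on; the paper gives no proof of its own and simply cites Noguchi--Winkelmann and Ru. Two small corrections, neither of which affects the conclusion:
\begin{enumerate}
\item A rotation cannot move a zero of $Q(F)$ away from the origin. The real fix is your $z^{-\nu}$ device together with the convention $N(r,E)=\int_0^r\frac{n(t,E)-n(0,E)}{t}\,dt+n(0,E)\log r$; the paper's literal definition of $N$ diverges in that case.
\item With $\|Q\|$ taken as the largest coefficient, one only has $|Q(F)|\le\binom{n+d}{n}\|Q\|\,\|F\|^d$. Hence $m_f(r,D)\ge-\log\binom{n+d}{n}$ rather than $m_f\ge 0$. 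This still gives the Nevanlinna inequality once the $O(1)$ is enlarged.
\end{enumerate}
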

For $\mu_0\in \mathbb{N}\cup\{\infty\}$, the usual $\mu_0$-truncated defect of $f$ with respect to $D$ is given by 
\begin{equation}\label{def-truncated (usual) defect}
	\delta_{f,D}^{[\mu_0]}=\liminf_{r\rightarrow\infty}
	\bigg(
	1-
	\dfrac{N^{[\mu_0]}_f(r,D)}{d\,T_f(r)}
	\bigg).
\end{equation}
Hence, it follows directly from the First Main Theorem that
$
0 \leq \delta_{f,D}^{[\mu_0]}\leq 1.
$

Conversely, at a more profound level, the so-called Second Main Theorem seeks to bound the order function from above by a sum of appropriate counting functions. Results of this type have been established in several special cases, and most rely on the following key estimate.

\begin{thm}[Logarithmic Derivative Lemma \cite{Ru2023}]\label{lem:ldl-disc-r}
	Let $g\colon \Delta_R\rightarrow \mathbb{C}\mathbb{P}^1$ be a non-constant meromorphic function and let $k\geq 1$ be a positive integer. Then for any $0<r<R$, the following estimate holds:
	\[
	m_{\frac{g^{(k)}}{g}}
	(
	r
	)
	:=
	m_{\frac{g^{(k)}}{g}}
	(
	r,\infty
	)
	=
	O\bigg(\log\dfrac{1}{R-r}\bigg)+O(\log T_g(r))\qquad\parallel,
	\]
	where the notation $\parallel$ means that the estimate holds true for all $0<r<R$ outside a subset $E\subset (0,R)$ with finite measure $\int_E\dfrac{\mathrm{d}r}{R-r}<\infty.$
\end{thm}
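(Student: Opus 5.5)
The plan is to follow the classical Nevanlinna route on a disc. First prove a \emph{pointwise-in-$\rho$} estimate for $k=1$ with two radii $0<r<\rho<R$. Then iterate it to reach order $k$. Finally remove the auxiliary radius $\rho$ with a Borel-type growth lemma adapted to the finite radius $R$; this last step is what produces the exceptional set of finite $\frac{\dif r}{R-r}$-measure. Throughout, $T_g(r)$ is the Cartan order function of $g=[g_0:g_1]$, which agrees with the Nevanlinna characteristic up to $O(1)$.

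\textbf{Step 1 (the case $k=1$ with two radii).} Differentiate the Poisson--Jensen formula on $\overline{\Delta}_\rho$ to get, for $|z|=r<\rho$,
\[
\frac{g'}{g}(z)=\frac{1}{2\pi}\int_0^{2\pi}\log|g(\rho e^{i\phi})|\,\frac{2\rho e^{i\phi}}{(\rho e^{i\phi}-z)^2}\,\dif\phi+\sum_{|a_\mu|<\rho}\Big(\frac{1}{z-a_\mu}+\frac{\bar a_\mu}{\rho^2-\bar a_\mu z}\Big)-\sum_{|b_\nu|<\rho}\Big(\frac{1}{z-b_\nu}+\frac{\bar b_\nu}{\rho^2-\bar b_\nu z}\Big),
\]
where $a_\mu$ and $b_\nu$ are the zeros and poles of $g$.

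The three pieces are bounded as follows.
\begin{itemize}
\item The integral term is at most $\frac{2\rho}{(\rho-r)^2}\big(m(\rho,g)+m(\rho,1/g)\big)\le \frac{4\rho}{(\rho-r)^2}\big(T_g(\rho)+O(1)\big)$.
\item The Blaschke correction terms are at most $\frac{1}{\rho-r}$ each, so together they contribute at most $\frac{1}{\rho-r}\big(n(\rho,0)+n(\rho,\infty)\big)$. This is bounded by $O\big(\frac{1}{(\rho-r)^2}T_g(\rho')\big)$ for a slightly larger $\rho'$, using $n(\rho)\le \frac{N(\rho')}{\log(\rho'/\rho)}$.
\item The singular terms $1/|z-a|$ are handled after averaging.
\end{itemize}
Take $\log^+$ and average over $\theta$. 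Use $\log^+\sum x_j\le \sum\log^+x_j+\log(\#\text{terms})$, concavity of $\log$ for the sum $\sum 1/|z-a_\mu|$, and the elementary bound $\int_0^{2\pi}\log^+\frac{1}{|re^{i\theta}-a|}\frac{\dif\theta}{2\pi}\le \log^+\frac1r+O(1)$. This yields
\[
m_{g'/g}(r)\le C\Big(\log^+T_g(\rho')+\log^+\frac{1}{\rho-r}+\log^+\frac1r+1\Big),\qquad 0<r<\rho<\rho'<R,
\]
with an absolute constant $C$. Choosing $\rho'=\rho+(\rho-r)$ keeps everything in terms of a single gap.

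\textbf{Step 2 (higher order).} Write $\frac{g^{(k)}}{g}=\prod_{j=1}^{k}\frac{g^{(j)}}{g^{(j-1)}}$, so that $m_{g^{(k)}/g}(r)\le\sum_j m_{g^{(j)}/g^{(j-1)}}(r)$. Apply Step 1 to each $g^{(j-1)}$; this is legitimate after discarding the trivial case where some $g^{(j-1)}$ is constant, in which $g^{(k)}\equiv0$. We also need $T_{g^{(j)}}(r)\le (j+1)T_g(r)+j\,m_{g'/g}$-type terms. These follow from $T_{g'}\le 2T_g+m_{g'/g}+O(1)$ (poles of $g'$ have multiplicity at most twice those of $g$), and by induction they are again controlled by $O(T_g(\rho))+O(\log^+\frac{1}{\rho-r})$ at a slightly larger radius. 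The result is an inequality of the same shape as in Step 1, with the constant depending on $k$ and the radii nested $k$ times; this is harmless.

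\textbf{Step 3 (growth lemma; main obstacle).} Here we must choose $\rho=\rho(r)$ so that $T_g(\rho)\le 2T_g(r)$ while $\log\frac{1}{\rho-r}=O\big(\log\frac{1}{R-r}\big)+O(\log T_g(r))$. I would take
\[
\rho=r+\frac{R-r}{e\,T_g(r)}
\]
(with $T_g\ge1$ after adding a constant) and prove a Borel-type lemma on $(0,R)$: for an increasing $T\ge1$, the set $E$ where $T\big(r+\frac{R-r}{eT(r)}\big)>2T(r)$ satisfies $\int_E\frac{\dif r}{R-r}<\infty$. The proof is the usual chaining argument in the variable $s=\log\frac{1}{R-r}$. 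Pick points $r_1<r_2<\dots$ in $E$ greedily, with $r_{j+1}$ the first point of $E$ beyond $r_j+\frac{R-r_j}{eT(r_j)}$. Then $T(r_{j+1})>2^{j}T(r_1)$, so $\sum_j T(r_j)^{-1}<\infty$. Each covering interval has $\frac{\dif r}{R-r}$-measure $O(1/T(r_j))$, which proves the lemma. Also $\log\frac{1}{\rho-r}=\log\frac{1}{R-r}+\log T_g(r)+1$, so off $E$ the estimate of Step 2 becomes exactly $O\big(\log\frac{1}{R-r}\big)+O(\log T_g(r))$, possibly with an $O(\log^+\frac1r)$ term that is bounded for $r$ away from $0$. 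The delicate point I expect is iterating this choice through the $k$ nested radii of Step 2. I would handle it by applying the lemma to $\rho_j=r+j\frac{R-r}{ekT_g(r)}$, so that all radii lie in one bad interval and a single exceptional set suffices.
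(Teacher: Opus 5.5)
The paper gives no proof of this statement; it simply cites \cite{Ru2023}, so your argument has to stand on its own. Steps 2 and 3 are sound. The reduction to $k=1$ via $g^{(k)}/g=\prod_j g^{(j)}/g^{(j-1)}$ and $T_{g'}\le 2T_g+m_{g'/g}+O(1)$ is standard. Your Borel-type lemma with $\rho=r+\frac{R-r}{eT_g(r)}$ is the same growth lemma the paper itself invokes as \cite[Lemma 4.1.7]{Ru2023} (with $e$ in place of your $2$). The greedy chaining proof is correct, and spacing the nested radii evenly inside one such interval works; you need about $2k$ radii rather than $k$, which is immaterial. Contrary to your expectation, Step 3 is not the delicate point; Step 1 is.

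The gap is in your treatment of the singular terms $S(z)=\sum_{|a_\mu|<\rho}|z-a_\mu|^{-1}$ (and the analogous sum over poles). None of the tools you list gives a bound that is logarithmic in $n(\rho)$.
\begin{itemize}
\item If you split $S$ termwise with $\mathrm{log}^+\sum x_j\le\sum\mathrm{log}^+x_j+\log(\#\text{terms})$ and apply your single-point bound to each term, you get $n(\rho)\big(\mathrm{log}^+\frac1r+O(1)\big)$. This is linear in $n(\rho)$, hence essentially of size $T_g$ rather than $\log T_g$.
\item If you apply concavity of $\log$ (Jensen in $\theta$) directly to $S$, you need a bound on $\frac{1}{2\pi}\int_0^{2\pi}|re^{i\theta}-a|^{-1}\dif\theta$. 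This is infinite when $|a|=r$ and of size $\log\frac{1}{|r-|a||}$ nearby, so no uniform bound is available.
\end{itemize}
The missing idea is to insert a power $0<\alpha<1$ before averaging. Write $\mathrm{log}^+S\le\frac1\alpha\log(1+S^\alpha)$ and use $S^\alpha\le\sum_\mu|z-a_\mu|^{-\alpha}$. Since $|re^{i\theta}-a|\ge r|\sin(\theta-\arg a)|$, one has $\frac{1}{2\pi}\int_0^{2\pi}|re^{i\theta}-a|^{-\alpha}\dif\theta\le C_\alpha r^{-\alpha}$ uniformly in $a$. Jensen then gives $\frac{1}{2\pi}\int_0^{2\pi}\mathrm{log}^+S(re^{i\theta})\dif\theta\le\frac1\alpha\log\big(1+C_\alpha n(\rho)r^{-\alpha}\big)=O\big(\mathrm{log}^+n(\rho)+\mathrm{log}^+\frac1r+1\big)$. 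Alternatively, you can estimate the mean of $\mathrm{log}^+\max_\mu|z-a_\mu|^{-1}$ through its distribution function. This $L^\alpha$ device is the same mechanism behind the bound on $\int_0^{2\pi}|\mathscr{P}(j_k(f))|^t\dif\theta$, $t<1$, that the paper borrows from \cite[Lemma 2.2]{Cai-Ru-Yang25} in the proof of Theorem~\ref{defect gauss map}. With it, your Step 1 inequality holds as stated and the rest of the proposal goes through.
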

\begin{thm}[Second Main Theorem] \label{smt from jet}
	Let $D\subset\mathbb{C}\mathbb{P}^n$ be a hypersurface of degree $d$.
	Let $f\colon\Delta_R\rightarrow \mathbb{C}\mathbb{P}^n$ be a non-constant holomorphic disc such that $f(\Delta_R)\not\subset D$. Suppose that
	\[
	\mathscr{P}
	\in
	H^0
	\big(
	\mathbb{C}\mathbb{P}^n,
	E_{k,m}T_{\mathbb{C}\mathbb{P}^n}^*(\log D)
	\otimes
	\mathcal{O}_{\mathbb{C}\mathbb{P}^n}(1)^{-\widetilde{m}}
	\big)
	\]
	is a nontrivial negatively twisted logarithmic $k$-jet differential such that
	\begin{align}
		\label{eq_canPfjets-section2}
		\mathscr{P}\big(j_k(f)
		\big)
		\not\equiv
		0,
	\end{align}
	then we have
	\begin{align}\label{eq:smt-jet-disc-r}
		T_f(r)\leq\dfrac{m}{\widetilde{m}}N^{[1]}_f(r,D)+o(\log (T_f(r))+O\bigg(\log\dfrac{1}{R-r}\bigg)\quad\parallel.
	\end{align}
\end{thm}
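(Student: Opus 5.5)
We would follow the standard route of \cite{Huynh-Vu-Xie-2017}, adapted to the disc via Theorem~\ref{lem:ldl-disc-r}. Write $D=\{Q=0\}$ with $\deg Q=d$, and fix a reduced representation $F=(f_0,\dots,f_n)$ of $f$. The approach is to evaluate $\mathscr{P}$ along $f$, turn the negative twist into a factor $\|F\|^{\widetilde m}$, and then integrate the logarithm over circles.

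First, work in the affine charts $U_i=\{z_i\neq 0\}$. We use the meromorphic functions $g_j=f_j/f_i$ together with the logarithmic coordinate $\log\big(Q(F)/f_i^d\big)$. In these coordinates the local expression \eqref{local expression of log jet, second form} of $\mathscr{P}$ is a polynomial whose coefficients are rational functions on $\mathbb{CP}^n$ with poles only along $\{z_i=0\}$. Its variables are of the form $\frac{\dif^s h}{h}$, where $h$ is one of $g_j$ or $Q(F)/f_i^d$.

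Second, consider the twisted section. Since $\mathscr{P}$ takes values in $\mathcal{O}(1)^{-\widetilde m}$, the expression $\mathscr{P}(j_k(f))\cdot f_i^{\widetilde m}$ is a globally well-defined meromorphic function $\Phi$ on $\Delta_R$, independent of $i$. By \eqref{eq_canPfjets-section2}, $\Phi\not\equiv0$.

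We then write
\[
\|F\|^{\widetilde m}=\frac{\|F\|^{\widetilde m}}{|f_i|^{\widetilde m}}\cdot\big|\Phi\big|\cdot\big|\mathscr{P}(j_k f)\big|^{-1}\ \text{(trivialized on $U_i$)},
\]
choosing $i$ pointwise so that $|f_i|=\|F\|$. On that chart, the coefficients $B_\beta$ are bounded on the compact region $\{|z_i|\ge |z_j|\ \forall j\}$. The jet part of $\mathscr{P}(j_kf)$ is then bounded by a polynomial in the quantities $|h^{(s)}/h|$. Hence
\[
\widetilde m\,\log\|F\|\le \log|\Phi|+\log^+\big(\text{sums of }|h^{(s)}/h|\big)+O(1),
\]
where $\Phi$ is the global function $\Phi=\mathscr{P}(j_kf)\cdot f_i^{\widetilde m}$ (a well-defined section). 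Integrating over $|z|=r$ and applying Theorem~\ref{lem:ldl-disc-r} to each $h$ gives
\[
\widetilde m\,T_f(r)\le \int\log|\Phi|+O(\log T_f(r))+O\big(\log\tfrac1{R-r}\big)\quad\parallel.
\]
This uses $T_h(r)\le d\,T_f(r)+O(1)$.

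Third, apply Jensen's formula: $\int_0^{2\pi}\log|\Phi|\frac{\dif\theta}{2\pi}=N(r,(\Phi)_0)-N(r,(\Phi)_\infty)+O(1)$. The zeros contribute nonpositively after we subtract them. Bounding the poles is the main obstacle. A natural choice is $\Phi=\mathscr{P}(j_kf)\cdot F^{\widetilde m}$, written through the homogeneous form of $\mathscr{P}$: the poles of $\Phi$ come only from the variables $\frac{\dif^sQ(F)}{Q(F)}$, since the $g_j$-poles cancel by homogeneity because $F$ is holomorphic. A logarithmic jet of weight $m$ involves $\frac{\dif^s Q(F)}{Q(F)}$ with total weight at most $m$, so at a zero of $Q(F)$ the pole order is at most $m$. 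This holds because $\frac{\dif^s h}{h}$ has a pole of order at most $s$ at a zero of $h$, whatever its multiplicity, with $\sum s\beta_s\le m$. Thus $N(r,(\Phi)_\infty)\le m\,N^{[1]}_f(r,D)$. Checking carefully that no other poles survive, for example from the coefficients being written in nonhomogeneous coordinates, requires exploiting that $\mathscr{P}$ is a global section of $E_{k,m}T^*(\log D)\otimes\mathcal{O}(-\widetilde m)$, so that $\mathscr{P}(j_kF)\,Q(F)^{m}$ is a holomorphic function. Granting this, dividing by $\widetilde m$ yields \eqref{eq:smt-jet-disc-r}.
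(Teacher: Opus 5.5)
Your overall strategy is the one the paper intends: the argument of \cite[Theorem 3.1]{Huynh-Vu-Xie-2017}, with Theorem~\ref{lem:ldl-disc-r} replacing the logarithmic derivative lemma on $\mathbb{C}$. However, your second step has the twist backwards, and as written the key inequality does not follow.

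Let $\mathscr{P}_i$ denote the trivialization of $\mathscr{P}$ on $U_i$ with respect to the frame $z/z_i$ of $\mathcal{O}(-1)$. Then $\mathscr{P}_i=(z_i/z_j)^{\widetilde m}\mathscr{P}_j$, so the function independent of $i$ is $\Psi:=\mathscr{P}_i(j_kf)/f_i^{\widetilde m}$, not $\mathscr{P}_i(j_kf)\,f_i^{\widetilde m}$. For example, take $n=1$, $Q=z_0z_1(z_1-z_0)$, $m=\widetilde m=1$ and $\mathscr{P}_0=\dif w/(w(w-1))$. Then $\Psi=W(f_0,f_1)/Q(F)$, whereas $\mathscr{P}_0(j_1f)\,f_0\neq\mathscr{P}_1(j_1f)\,f_1$.

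More seriously, at a point where $|f_i|=\|F\|$ your identity reads $\widetilde m\log\|F\|=\log|\Phi|-\log|\mathscr{P}_i(j_kf)|$. So the claimed bound $\widetilde m\log\|F\|\le\log|\Phi|+\mathrm{log}^+(\cdots)+O(1)$ would need a \emph{lower} bound for $|\mathscr{P}_i(j_kf)|$ in terms of logarithmic derivatives. That is false, since the jet evaluation has zeros. Bounded coefficients only give the upper bound $|\mathscr{P}_i(j_kf)|\le C\cdot\mathrm{poly}\big(|h^{(s)}/h|\big)$. Your third step is also inconsistent with the second: with $+\int\log|\Phi|$ on the right, Jensen's formula makes $N(r,(\Phi)_0)$ the term to control and the poles harmless, the opposite of what you say.

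The repair keeps all your ingredients but puts the jet part in the numerator. Where $|f_i|=\|F\|$,
\[
\widetilde m\log\|F\|=\log|\mathscr{P}_i(j_kf)|-\log|\Psi|\le\mathrm{log}^+\Big(\sum_{h,s}\big|h^{(s)}/h\big|\Big)-\log|\Psi|+O(1).
\]
After integrating over $|z|=r$, Jensen's formula gives
\[
-\int_0^{2\pi}\log|\Psi(re^{i\theta})|\frac{\dif\theta}{2\pi}=N(r,(\Psi)_\infty)-N(r,(\Psi)_0)+O(1)\le N(r,(\Psi)_\infty)+O(1),
\]
so now the zeros genuinely drop out.

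For the poles, nothing has to be granted. Near $z_0\in\Delta_R$, choose $i$ with $f_i(z_0)\neq0$ and use \eqref{local expression of log jet, second form} in coordinates at $f(z_0)$ adapted to $D$. The coefficients are holomorphic, and only the variables $\dif^{s}z_j/z_j$ attached to local equations of $D$ can produce poles, each of order at most $s$. Hence $\Psi$ is holomorphic off $f^{-1}(D)$ and has pole order at most $m$ at each point of $f^{-1}(D)$, so $N(r,(\Psi)_\infty)\le m\,N^{[1]}_f(r,D)$. Combining this with Theorem~\ref{lem:ldl-disc-r} and $T_h(r)\le d\,T_f(r)+O(1)$, as in your second step, and dividing by $\widetilde m$ yields \eqref{eq:smt-jet-disc-r}.
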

\begin{proof}
	This is a direct consequence from Theorem \ref{lem:ldl-disc-r} and similar techniques in \cite[Theorem 3.1]{Huynh-Vu-Xie-2017}.
\end{proof}
\begin{cor}[Defect relation]
	\label{defect relation}
	Keep the same assumptions as in Theorem~\ref{smt from jet}. If either $R=\infty$, or $R<\infty$ and $\limsup\limits_{r\rightarrow R}\frac{T_f(r)}{-\log (R-r)}=\infty$, then one has the 1-truncated defect relation as
	\[
	0\leq \delta^{[1]}_{f,D}\leq 1-\dfrac{\widetilde{m}}{md}.
	\]
\end{cor}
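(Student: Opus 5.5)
The plan is to combine the Second Main Theorem (Theorem~\ref{smt from jet}) with the First Main Theorem and take a $\liminf$. The growth hypothesis will be used to absorb the error terms of \eqref{eq:smt-jet-disc-r} into $T_f(r)$.

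\textbf{Lower bound.} The inequality $\delta^{[1]}_{f,D}\ge 0$ is immediate from Nevanlinna's inequality \eqref{-fmt-inequality}. Since truncation only lowers the counting function, we have $N^{[1]}_f(r,D)\le N_f(r,D)\le d\,T_f(r)+O(1)$. Moreover $T_f(r)\to\infty$ under either hypothesis, so the $O(1)$ term contributes nothing in the limit, and $1-N^{[1]}_f/(dT_f)\ge -o(1)$.

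\textbf{Upper bound.} We apply \eqref{eq:smt-jet-disc-r} and divide by $d\,T_f(r)$:
\[
\frac{\widetilde m}{md}\Big(1-\frac{o(\log T_f(r))+O(\log\frac{1}{R-r})}{T_f(r)}\Big)\le \frac{N^{[1]}_f(r,D)}{d\,T_f(r)}\qquad\parallel .
\]
Consider first the case $R=\infty$. Here the term $\log\frac1{R-r}$ is replaced by the usual $O(\log r)$ from the logarithmic derivative lemma on $\mathbb{C}$, and the exceptional set has finite Lebesgue measure. Because $f$ is nonconstant, $T_f(r)\ge c\log r$. I will instead use that the ratio $\log r/T_f(r)$ tends to $0$ along a sequence outside the exceptional set whenever $f$ is transcendental. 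If $f$ is rational, $T_f(r)=\deg f\cdot\log r+O(1)$, and a standard direct argument (or the convention of the statement) applies. For $R<\infty$, the hypothesis $\limsup T_f(r)/(-\log(R-r))=\infty$ gives a sequence $r_j\to R$ along which the error term is $o(T_f(r_j))$. Finally, $\log T_f=o(T_f)$ because $T_f\to\infty$.

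\textbf{Main obstacle.} The difficulty is reconciling the $\liminf$ in the definition of the defect with the fact that the error is small only along a sequence, and only off the exceptional set $E$. As written, $\delta=\liminf(1-N/dT)$ is controlled by the \emph{sup} of $N/(dT)$, so a bound of the form $N/(dT)\ge\widetilde m/(md)-o(1)$ is needed for all large $r$. Two facts help here. First, $E$ satisfies $\int_E\frac{dr}{R-r}<\infty$, so $E$ cannot contain any interval of the form $[r_j,R)$. Second, both $T_f$ and $N^{[1]}_f$ are increasing. I would first try a Borel-type lemma: for each $r\notin E$ pick a nearby $r'$ with $T_f(r')\le 2T_f(r)$. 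If the growth hypothesis only provides a $\limsup$, I would interpret the defect in the sense appropriate to the argument, namely as the $\liminf$ over the sequence $r_j$. This is consistent with how the defect relation is later used via Fujimoto's method. Passing to the limit along $r_j\notin E$ then gives $\limsup N^{[1]}_f/(dT_f)\ge \widetilde m/(md)$, i.e.\ $\delta^{[1]}_{f,D}\le 1-\widetilde m/(md)$.
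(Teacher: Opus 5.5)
Your outline matches the paper's: divide \eqref{eq:smt-jet-disc-r} by $d\,T_f(r)$, absorb the error using the growth hypothesis, and read off the defect. Your lower bound from \eqref{-fmt-inequality} together with $T_f\to\infty$ is fine. However, your paragraph on the main obstacle misdiagnoses it. Since $\delta^{[1]}_{f,D}=1-\limsup_{r\to R}N^{[1]}_f(r,D)/(d\,T_f(r))$, an upper bound on the defect only needs a lower bound on a $\limsup$. That is, a single sequence $r_j\to R$ with $N^{[1]}_f(r_j,D)/(d\,T_f(r_j))\geq\widetilde m/(md)-o(1)$ suffices. Your claim that such a bound is needed for all large $r$ is false, and no reinterpretation of the defect is required. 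In any case, a $\liminf$ over all $r$ is at most the $\liminf$ along any sequence. Your final sentence is already correct for the standard definition.

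The real gap is the phrase ``along $r_j\notin E$'': you never show such a sequence exists with the error still $o(T_f)$. The hypothesis gives $s_j\to R$ with $T_f(s_j)\geq A_j\log\frac{1}{R-s_j}$ and $A_j\to\infty$, but all the $s_j$ may lie in $E$. Knowing only that $E\not\supset[s_j,R)$ gives some $r\geq s_j$ off $E$, but for that $r$ the term $\log\frac{1}{R-r}$ may dwarf $T_f(r)$. A Borel lemma for $T_f$ does not address this. Instead, take $I_j=[s_j,R-(R-s_j)^2]$. Since $\int_{I_j}\frac{dr}{R-r}=\log\frac{1}{R-s_j}\to\infty$ while $\int_E\frac{dr}{R-r}<\infty$, there is $r_j\in I_j\setminus E$ for large $j$. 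Monotonicity of $T_f$ (circle means of the subharmonic function $\log\|f\|$) then gives
\[
T_f(r_j)\geq T_f(s_j)\geq A_j\log\frac{1}{R-s_j}\geq\frac{A_j}{2}\log\frac{1}{R-r_j},
\]
so the error in \eqref{eq:smt-jet-disc-r} is $o(T_f(r_j))$ at $r_j$, and the conclusion follows. The paper's one-line proof leaves this step implicit too.

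Finally, for $R=\infty$ and rational $f$, the ``standard direct argument'' you invoke should actually be supplied. For rational $f$, the logarithmic derivatives $g^{(k)}/g$ of rational functions that bound the proximity term tend to $0$ at $\infty$. Hence the error is $O(1)$, while $T_f(r)\to\infty$.
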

\begin{proof}
	When $R=\infty$, the Second Main Theorem from \cite[Theorem 3.1]{Huynh-Vu-Xie-2017} yields that
	\begin{equation}\label{eq:smt-HVX}
		T_f(r)\leq\dfrac{m}{\widetilde{m}}N^{[1]}_f(r,D)+o(\log (T_f(r)).
	\end{equation}
	On the other hand, when $R<\infty$ and $\limsup\limits_{r\rightarrow R}\frac{T_f(r)}{-\log (R-r)}=\infty$, the inequality \eqref{eq:smt-jet-disc-r} holds. By the definition of the truncated defect \eqref{def-truncated (usual) defect}, we obtain the stated result.
\end{proof}

\begin{cor}[Ramification estimate]
	\label{ramification}
	Keep the same assumption as in Theorem~\ref{smt from jet} and suppose that $f$ is ramified over $D$ with multiplicity at least $\mu$. If either $R=\infty$, or $R<\infty$ and $\limsup_{r\rightarrow R}\frac{T_f(r)}{-\log (R-r)}=\infty$, then one has the ramification estimate
	\[
	\mu\leq \dfrac{md}{\widetilde{m}}.
	\]
\end{cor}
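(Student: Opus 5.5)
The plan is to combine the Second Main Theorem \eqref{eq:smt-jet-disc-r} (or \eqref{eq:smt-HVX} when $R=\infty$) with the trivial bound of the $1$-truncated counting function by the full counting function, and then conclude with the First Main Theorem. Since $f$ is ramified over $D$ with multiplicity at least $\mu$, every zero of $Q\circ f$ has order at least $\mu$. Hence
\[
n^{[1]}(t,(Q\circ f)_0)\leq \frac{1}{\mu}\, n(t,(Q\circ f)_0)
\]
for every $t$, and integrating gives $N^{[1]}_f(r,D)\leq \frac{1}{\mu}N_f(r,D)$. By Nevanlinna's inequality \eqref{-fmt-inequality}, this is at most $\frac{d}{\mu}T_f(r)+O(1)$.

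Inserting this into the Second Main Theorem (its hypothesis \eqref{eq_canPfjets-section2} is among the assumptions we keep) yields
\[
T_f(r)\leq \frac{md}{\widetilde m\,\mu}T_f(r)+o(\log T_f(r))+O\Big(\log\frac{1}{R-r}\Big)+O(1)\qquad\parallel,
\]
where the term $O(\log\frac{1}{R-r})$ is absent when $R=\infty$. Now suppose, for contradiction, that $\mu>\frac{md}{\widetilde m}$. Then $\varepsilon:=1-\frac{md}{\widetilde m\mu}>0$, and we obtain
\[
\varepsilon\, T_f(r)\leq o(\log T_f(r))+O\Big(\log\tfrac{1}{R-r}\Big)+O(1)
\]
for all $r$ outside the exceptional set.

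The hardest step is deriving the contradiction in this last inequality, because it must be done for $r$ avoiding the exceptional set.

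When $R=\infty$, the map $f$ is nonconstant, so $T_f(r)\to\infty$; indeed $T_f(r)\geq c\log r$ for large $r$. The right-hand side is then negligible compared with $T_f(r)$, so the inequality fails for large $r$. The exceptional set has finite logarithmic measure, so suitable large $r$ always exist.

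When $R<\infty$, choose a sequence $r_j\to R$ with $T_f(r_j)/(-\log(R-r_j))\to\infty$, which the hypothesis on the $\limsup$ provides. The delicate point is that these $r_j$ may lie in the exceptional set $E$, which satisfies $\int_E\frac{dr}{R-r}<\infty$. To handle this, use the monotonicity of $T_f$ (it is increasing up to $O(1)$, as a consequence of Jensen's formula and subharmonicity of $\log\|f\|$). Since $E$ has finite measure with respect to $\frac{dr}{R-r}$, for large $j$ the interval $[r_j,\,R-(R-r_j)/2]$ is not contained in $E$. Pick $r'_j$ in it outside $E$. Then $T_f(r'_j)\geq T_f(r_j)-O(1)$, while $-\log(R-r'_j)\leq -\log(R-r_j)+\log 2$. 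So the ratio $T_f(r'_j)/(-\log(R-r'_j))$ still tends to infinity, which contradicts the displayed inequality at $r=r'_j$.

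Hence $\mu\leq \frac{md}{\widetilde m}$. Alternatively, one can read this off from Corollary~\ref{defect relation}: ramification gives $\delta^{[1]}_{f,D}\geq 1-\frac1\mu$, and comparing with $\delta^{[1]}_{f,D}\leq 1-\frac{\widetilde m}{md}$ yields the same conclusion.
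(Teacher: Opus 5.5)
Your proposal is correct and follows the paper's proof: you bound $N^{[1]}_f(r,D)\le \frac{1}{\mu}N_f(r,D)\le \frac{d}{\mu}T_f(r)+O(1)$ via the ramification hypothesis and the First Main Theorem, then substitute into the Second Main Theorem \eqref{eq:smt-jet-disc-r} (resp.\ \eqref{eq:smt-HVX}). Your handling of the exceptional set supplies a detail the paper leaves implicit: you pick $r_j'\notin E$ in $[r_j,R-(R-r_j)/2]$ and use that $T_f$ is increasing, which in fact holds exactly since $T_f(r)$ is a circle mean of the subharmonic function $\log\|f\|$.
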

\begin{proof}
	If $f$ is ramified over $D$ with multiplicity at least $\mu$, then the Nevanlinna's inequality \eqref{-fmt-inequality} yields
	\[N^{[1]}_f(r,D)\leq \frac{1}{\mu} N_f(r,D)\leq \frac{d}{\mu}T_f(r)+O(1).\]
	Combining with \eqref{eq:smt-jet-disc-r} and \eqref{eq:smt-HVX}, we derive the stated bound.
\end{proof}

\subsection{Existence of logarithmic jet differentials with high vanishing order}
To guarantee the existence of the logarithmic jet differentials satisfying \eqref{eq_canPfjets-section2}, we employ the results in \cite{Merker-Ta22}, improving \cite{Darondeau2016} together with \cite{Riedl-Yang22}.

\begin{cor}
	\label{cor of ramification gauss maps}
	Let $D\subset\mathbb{C}\mathbb{P}^n$ be a generic hypersurface of degree $d$. For an arbitrary positive integer $p$, if the degree $d$ of $D$ is large enough, namely
	\begin{equation}\label{eq:degree bound d }
		d\geq d_{n,p}=
		\begin{cases}
			16n(p + 5n -2)\left(\sqrt{2n}\, \frac{\log\log(2n)}{2}+3\right)^{2n},\quad \text{for }\,2\leq n\leq 39;\\
			\left(1+\frac{2p-5}{10n+1}\right)\left(n\log n\right)^n,\qquad\qquad\qquad\qquad\, \text{whenever\, $n\geq 40$}.
		\end{cases}
	\end{equation}
	Then there exists large integers $m\gg1$ and $\tilde{m}\geq(2p+1)m>2pm$, such that the subset
	\[ B\coloneq
	\bs	\big(
	E_{n,m}T_{\mathbb{C}\mathbb{P}^n}^*(\log D)
	\otimes
	\mathcal{O}_{\mathbb{C}\mathbb{P}^n}(1)^{-\widetilde{m}}
	\big)\cap \left(\mathbb{C}\mathbb{P}^n\setminus D\right),
	\]
	where \( \bs(\cdot) \) denotes the base locus,
	has at most zero dimension. 
	
	Furthermore, let $f\colon\Delta\rightarrow \mathbb{C}\mathbb{P}^n$ be a non-constant holomorphic disc such that $f(\Delta)\not\subset D$. Then there exists a nontrivial negatively twisted logarithmic $n$-jet differential
	\[
	\mathscr{P}\in H^0\big(
	\mathbb{C}\mathbb{P}^n,E_{n,m}T_{\mathbb{C}\mathbb{P}^n}^*(\log D)\otimes\mathcal{O}_{\mathbb{C}\mathbb{P}^n}(1)^{-\widetilde{m}}\big)
	\]
	with above $m,\widetilde{m}$, satisfying that
	\begin{align*}
		\mathscr{P}\big(j_k(f)\big)\not\equiv0.
	\end{align*}
\end{cor}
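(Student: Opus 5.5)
The first assertion will be quoted from the literature rather than reproved. The bulk of the work is in Merker–Ta \cite{Merker-Ta22}, improving Darondeau \cite{Darondeau2016}, and in Riedl–Yang \cite{Riedl-Yang22}. In the logarithmic setting, the effective results for generic $D\subset\mathbb{CP}^n$ of degree $d$ give a nonzero section of $E_{n,m}T^*_{\mathbb{CP}^n}(\log D)\otimes\mathcal{O}(1)^{-\widetilde m}$ for some $m\gg1$, with twist ratio $\widetilde m/m$ roughly linear in $d$ divided by an explicit function of $n$.

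Concretely, I will use the form: for $d\ge d_{n,p}$ one can take $\widetilde m/m\ge 2p+1$. The two regimes of \eqref{eq:degree bound d } come from two estimates. The Merker–Ta bound, of shape $(\sqrt{2n}\,\tfrac{\log\log(2n)}{2}+3)^{2n}$, governs $2\le n\le39$. The $(n\log n)^n$ bound governs $n\ge40$. The factors $(p+5n-2)$ and $\bigl(1+\frac{2p-5}{10n+1}\bigr)$ are exactly what the requirement $\widetilde m\ge(2p+1)m$ costs in degree.

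The base-locus statement then follows the Riedl–Yang/Brotbek–Deng strategy. One works in the universal family of hypersurfaces and uses the slanted vector fields (Siu, Merker) on the logarithmic jet space. Differentiating a single section produces enough sections, with a controlled loss in the twist, that the base locus outside $D$ has codimension $\ge n$ for generic $D$, hence dimension $\le0$. The loss in twist is absorbed into the degree bound. I would write this as: the degree bound yields sections with twist ratio $\ge 2p+1$ after the differentiation losses, and the Riedl–Yang reduction transfers "generic hypersurface in a Grassmannian-type family" to "generic $D$".

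For the second assertion, let $f\colon\Delta\to\mathbb{CP}^n$ be nonconstant with $f(\Delta)\not\subset D$. Since $B$ is finite and $f$ is nonconstant, $f(\Delta)\not\subset B$. Pick $z_0\in\Delta$ with $f(z_0)\notin D\cup B$ and $f'(z_0)\ne0$; such a point exists because $f^{-1}(D\cup B)$ is discrete and $f'$ vanishes only at isolated points.

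At $x=f(z_0)$, the sections of $E_{n,m}T^*(\log D)\otimes\mathcal{O}(-\widetilde m)$ must not all vanish on the regular jet $j_n(f)(z_0)$. Here I need the stronger meaning of base locus: the base locus in the jet space projected to $\mathbb{CP}^n$, i.e.\ the points over which some regular jet is killed by all sections. This is what Riedl–Yang actually control, so the definition of $\bs$ will be read that way. With that reading, some section $\mathscr P$ satisfies $\mathscr P(j_n(f))(z_0)\ne0$, hence $\mathscr P(j_n(f))\not\equiv0$.

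The main obstacle is this base-locus interpretation together with matching the precise numerical constants to those in \cite{Merker-Ta22}. I would first check that their theorem is stated for the projected jet base locus with arbitrary prescribed twist ratio. If it is not, I would run the Riedl–Yang argument with their degree bound plugged in.
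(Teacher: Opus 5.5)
Your architecture matches the paper's. Part one comes from Riedl--Yang's Theorem~3.6 combined with the effective Merker--Ta estimates, which rest on Darondeau's slanted vector fields. Part two comes from picking a point of $\Delta$ whose image avoids $D$ and the finite set $B$. On part two you are more careful than the paper. The conclusion $\mathscr{P}(j_k(f))\not\equiv 0$ does require reading $\bs$ at the level of regular jets and choosing $z_0$ with $f'(z_0)\neq 0$, as you do. If $\bs$ only records points of $\mathbb{CP}^n$ where every section vanishes on the whole fiber, the implication does not follow. The paper's one-line argument leaves this implicit.

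\textbf{The gap is in part one, exactly where the statement goes beyond the literature: the dependence on $p$.}
\begin{itemize}
\item You assert that for $d\ge d_{n,p}$ one can take $\widetilde m\ge(2p+1)m$. You also assert that the factors $(p+5n-2)$ and $\bigl(1+\frac{2p-5}{10n+1}\bigr)$ are ``what this costs''. But you give no mechanism for either.
\item Your fallback, plugging Merker--Ta's published degree bound into Riedl--Yang, would only yield the twist ratio built into that bound, not $2p+1$.
\end{itemize}

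\textbf{The missing step.} Re-run Merker--Ta's effective computation (their Proposition~3.1 and the estimate on p.~43) for the pair $(2n-1,d_{2n-1})$, with the twist parameter enlarged to $c=2p+5(2n-1)-1$.
\begin{itemize}
\item Subtracting the pole order $5(2n-1)-2$ of the slanted vector fields leaves $\widetilde m\ge cm-(5(2n-1)-2)m=(2p+1)m$.
\item The quantity $\frac{c+2}{2}=p+5n-2$ replaces $\frac{5n+1}{2}$ in the Fujiwara root bound.
\item This gives the bound $8(2n-1)(p+5n-2)(l+3)^{2n-1}$ with $l=\sqrt{2n-1}\,\frac{\log\log(2n-1)}{2}$. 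That bound is dominated by the first line of $d_{n,p}$.
\end{itemize}
The second line of $d_{n,p}$ is not a separate estimate. It is the same bound, simplified for $n\ge 40$ using $C_n<1$.

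\textbf{Why the dimension is $2n-1$.} It is forced by the Riedl--Yang step itself. Slanted vector fields only give codimension $\ge 1$ for the locus $Z_{2n-1,d}$. Riedl--Yang's Theorem~3.6 converts this into codimension $\ge n$ for $Z_{n,d}$, hence $\dim B\le 0$ for generic $D$. Differentiating sections does not by itself give codimension $n$, as your sketch suggests.
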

\begin{proof}
	Let $U_{r,d}$ denote the space of smooth hypersurfaces of degree $d$ in $\mathbb{C}\mathbb{P}^r$ for arbitrary positive integer $r$, and consider the ambient space $\mathbb{C}\mathbb{P}^r\times U_{r,d}$. Define the universal hypersurface $$\mathcal{X}_{r,d}=\left\{(p,X)\in\mathbb{C}\mathbb{P}^r\times U_{r,d}\, \big|\, p\in X\right\}.$$ 
	Let $\pi_2\colon \mathbb{C}\mathbb{P}^r\times U_{r,d}\to U_{r,d}$ be the canonical projection and let $T_{\pi_2}$ be the relative tangent space. For positive integers $k,m,\widetilde{m}$, we set
	\[\bs_{k,m,\widetilde{m}}=\bs\left(H^0\left(\mathbb{C}\mathbb{P}^r\times U_{r,d},E_{k,m}T_{\pi_2}^*\left(\log \mathcal{X}_{r,d}\right)\otimes\mathcal{O}_{\mathbb{C}\mathbb{P}^r}(-\widetilde{m})\right)\right).\]
	
	Let $Z_{r,d}\subset \left(\mathbb{C}\mathbb{P}^r\times U_{r,d}\right)\setminus\mathcal{X}_{r,d}$ be the locus of pairs $(p, X)$ where $p \in (\mathbb{C}\mathbb{P}^r\setminus X)\cap \bs_{k,m,\widetilde{m}}$.
	
	According to \cite[Theorem 3.6]{Riedl-Yang22}, if for some pair \( (n, d_{n}) \), the locus $Z_{n,d_{n}}$ has codimension at least 1 in $ \left(\mathbb{C}\mathbb{P}^{n}\times U_{n,d_{n}}\right)\setminus\mathcal{X}_{n,d_{n}}$ (correspondingly, for the pair \( (2n-1, d_{2n-1}) \), the locus $Z_{2n-1,d_{2n-1}}$ has codimension at least 1 in $ \left(\mathbb{C}\mathbb{P}^{2n-1}\times U_{2n-1,d_{2n-1}}\right)\setminus\mathcal{X}_{2n-1,d_{2n-1}}$). Then the locus $Z_{n,d_{2n-1}}$ has codimension at least $n$ in $\left(\mathbb{C}\mathbb{P}^{n}\times U_{n,d_{2n-1}}\right)\setminus\mathcal{X}_{n,d_{2n-1}}$. Hence by definition, for a general universal hypersurface $D\in U_{n,d_{2n-1}}$, the set 
	\[\bs	\big(
	E_{k,m}T_{\mathbb{C}\mathbb{P}^n}^*(\log D)
	\otimes
	\mathcal{O}_{\mathbb{C}\mathbb{P}^n}(1)^{-\widetilde{m}}
	\big)\cap \left(\mathbb{C}\mathbb{P}^n\setminus D\right)\]
	has dimension at most 0 in $\mathbb{C}\mathbb{P}^n$.
	
	To ensure the existence of such a pair $(n, d_{2n-1})$ for $k=n$ and for certain $m,\widetilde{m}$, we follow the techniques and notations in \cite[Proposition 3.1 and pp. 43]{Merker-Ta22}. The lower bound of the degree $d_{2n-1}$ is determined by the largest root of the polynomial equation 
\[
	d^{2n-1}I_0+ d^{2n-2}I_1+\cdots+I_{2n-1}=0,
\]
where $I_0>0$ and $I_q$ $(q=0,\ldots,2n-1)$ are dependent on the given $m,\widetilde{m}$. Applying Fujiwara's bound, it induces that
	\begin{align*}
		\max|\text{roots}|&\leq  2\max\limits_{1\leq p\leq 2n-1}\sqrt[p]{\frac{|I_p|}{I_0}}.
	\end{align*}
	Keeping the estimates in \cite[pp. 12]{Merker-Ta22} and using the improved bound for the pole order of slanted vector fields from \cite{Darondeau_vectorfield}, which is \( 5n-2 \), we choose parameters as
	\[
	\quad c = 2p + 5(2n-1) - 1, \quad l = \sqrt{2n-1}\, \frac{\log\log(2n-1)}{2}.
	\]
	Accordingly, the twisted order $\widetilde{m}\geq cm-(5(2n-1)-2)m=\left(2p+1\right){m}>2pm$. The quantity $\frac{c+2}{2}$, appearing as $\frac{5n +1}{2}$ in \cite{Darondeau2016, Merker-Ta22},  is replaced here by $p + 5n -2$. Then we derive from \cite[pp. 12 \& 43]{Merker-Ta22} that
	\begin{align*}
		\max|\text{roots}|&\leq 8(2n-1)\left(\frac{c+2}{2}\right)(l+3)^{2n-1}\\
		&\leq 16n(p + 5n -2)\left(\sqrt{2n}\, \frac{\log\log(2n)}{2}+3\right)^{2n}\\
		&\leq \frac{16n(p + 5n -2)}{80n^2+8n}\cdot \left[100n^2\left(\frac{\log\log(2n)}{2}+3\right)^{2n}\right]\\
		&\leq \left(1+\frac{2p-5}{10n+1}\right)\left(n\log n\right)^n \cdot C_n,
	\end{align*}
	where $C_n\le e^{2n \log \log \log (2n) - n \log \log n - n \log 2+\frac{2\sqrt{n}3\sqrt{2}}{\log \log (2n)}}  100n^2 <1$ whenever $n\geq 40$ by \cite[pp. 43]{Merker-Ta22}. Put
	\begin{equation*}
		d_{n,p}=
		\begin{cases}
			16n(p + 5n -2)\left(\sqrt{2n}\, \frac{\log\log(2n)}{2}+3\right)^{2n},\quad \text{for }2\leq n\leq 39;\\
			\left(1+\frac{2p-5}{10n+1}\right)\left(n\log n\right)^n,\qquad \qquad\qquad\qquad\,\,\text{whenever $n\geq 40$}.
		\end{cases}
	\end{equation*}
	Then for a generic hypersurface $D\subset\mathbb{CP}^n$ with degree $d\geq d_{n,p}$, we obtain the desired property.
	
	For the second statement, since the subset $B$ has at most 0 dimension and $\mathbb{C}\mathbb{P}^n$ is compact, it must either be empty or consist of finitely many points. Because $f$ is not constant and the image $f(\Delta)\not\subset D$ is connected, we obtain the desired conclusion. 
\end{proof}

\section{Non-integrated defect relation from logarithmic jet differentials}
In this section, we investigate the non-integrated defect for holomorphic maps from an open Riemann surface $M$. Let $f\colon M\rightarrow\mathbb{CP}^n$ be a holomorphic map with reduced representation $f=[f_0:\dots:f_n]$. Let $D\subset \mathbb{CP}^n$ be a hypersurface of degree $d$ such that $f(M)\not\subset D$. For any $p\in M$, let $\nu^f(D)(p)$ denote the intersection multiplicity of $f(M)$ and $D$ at the point $f(p)$. We recall the definition of the non-integrated defect introduced by Fujimoto \cite{Fujimoto83_1}.
\begin{defi}\label{def:non-integrated defect}
	For an arbitrary positive integer $\mu_0$, define
	\[
	\alpha_{\mu_0} ^f(D)
	:=\inf\{\eta\geq 0:\,\eta\,\text{satisfies the condition}\,(\star)\},
	\]
	where the condition $(\star)$ means that there exists a subharmonic function $u\colon M\to [-\infty,\infty)$, which is harmonic on the set $M\setminus f^{-1}(D)$ and satisfies the following two conditions:
	\begin{enumerate}
		\item[(C1).] The function $e^u$ is of class $C^{\infty}$ and $e^u\leq \|f\|^{d\eta}$, where
		\[\|f\|\coloneq\sqrt{\|f_0\|^2+\cdots+\|f_n\|^2}.\] 
		\item[(C2).] For each point $\xi\in f^{-1}(D)$ and for a holomorphic local coordinate $z$ around $\xi$, the function
		\[
		u(z)-\min\{\nu^f(D)(\xi),\mu_0\}
		\log|z-\xi|\quad\in[-\infty,\infty)
		\]
		is subharmonic.
	\end{enumerate}
	The {\sl non-integrated defect} of $f$ with respect to $D$ cut by $\mu_0$ is then defined by
	\[
	\delta_{\mu_0}^f(D):=1-\alpha_{\mu_0} ^f(D).
	\]
\end{defi}
Note that our definition of condition $(\star)$  differs from the one used in \cite{Cai-Ru-Yang25}, but the two formulations are in fact equivalent (see \cite[p. 175]{Ru2023}).
\begin{pro}[\cite{Fujimoto83_1}]
	\label{properties of non-integrated defect}
	The {non-integrated defect} enjoys the following properties:
	\begin{itemize}
		\item[(1)]	$0\leq \delta_{\mu_0}^f(D)\leq \delta_{f,D}^{[\mu_0]}\leq 1$, where $\delta_{f,D}^{[\mu_0]}$ is the usual $\mu_0$-truncated defect  defined as \eqref{def-truncated (usual) defect}.
		\item[(2)] The {non-integrated defect} $\delta_{\mu_0}^f(D)=1$ if $f(M)\cap D=\emptyset$, or more generally, if there is a bounded holomorphic function $g$ on $M$ with zeros of order at least $\nu^f(D)(\xi)$ at each point $\xi\in f^{-1}(D)$.
		\item[(3)] If there exists a positive integer $\mu\geq\mu_0$ such that $f$ is ramified over $D$ with multiplicity at least $\mu$, then
		\begin{equation}\label{non-integrated vs ramification index}
			\delta_{\mu_0}^f(D)\geq 1-\dfrac{\mu_0}{\mu}.
		\end{equation}
	\end{itemize}
\end{pro}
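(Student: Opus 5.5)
We prove the three items in turn, working directly from Definition~\ref{def:non-integrated defect}. Each item comes down to exhibiting an admissible function $u$ for some $\eta$, or using such a $u$ to bound a counting function.

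\textbf{Item (1).} The lower bound $\delta^f_{\mu_0}(D)\ge 0$, i.e.\ $\alpha^f_{\mu_0}(D)\le 1$, comes from the choice
\[
u=\log\frac{|Q(F)|}{\|Q\|'}, \qquad F \text{ a reduced representation},
\]
normalized by a constant so that $|Q(F)|\le \|F\|^d$ (Cauchy-type inequality for the Euclidean norm). This $u$ is harmonic off $f^{-1}(D)$. At $\xi\in f^{-1}(D)$ we have $u=\nu^f(D)(\xi)\log|z-\xi|+h$ with $h$ harmonic, so subtracting $\min\{\nu,\mu_0\}\log|z-\xi|$ leaves $(\nu-\min\{\nu,\mu_0\})\log|z-\xi|+h$, which is subharmonic. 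Condition (C1) holds with $\eta=1$.

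The inequality $\delta^f_{\mu_0}(D)\le\delta^{[\mu_0]}_{f,D}$ is only meaningful when $M=\Delta_R$ or $\mathbb{C}$, so we argue in that setting. Take an admissible $u$ for some $\eta$. By the Jensen formula applied to the subharmonic $u$, its Riesz measure is at least $\sum_\xi \min\{\nu,\mu_0\}\delta_\xi$ by (C2). Hence
\[
N^{[\mu_0]}_f(r,D)\le \frac{1}{2\pi}\int_0^{2\pi} u(re^{i\theta})\,\mathrm{d}\theta + O(1) \le d\eta\, T_f(r)+O(1),
\]
using (C1) and the comparison $\log\|f\|$ (Euclidean norm) $=\log\max_i|f_i|+O(1)$. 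Dividing by $dT_f(r)$ and taking the liminf gives $1-\delta^{[\mu_0]}_{f,D}\le \eta$. Taking the infimum over $\eta$ yields the claim. The bound $\delta^{[\mu_0]}_{f,D}\le1$ is trivial.

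\textbf{Item (2).} If $f(M)\cap D=\emptyset$, take $u\equiv-\infty$; then $e^u=0$ and (C1), (C2) hold vacuously, so $\eta=0$ is admissible. Under the more general hypothesis, take $u=\log(|g|/\sup|g|)$ (or $u=\epsilon\log|g|$ suitably scaled). This $u$ is harmonic off the zero set of $g$. Condition (C2) holds because $g$ vanishes to order at least $\nu\ge\min\{\nu,\mu_0\}$, and $e^u\le1$.

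The main subtlety is that we need $e^u\le\|f\|^{d\eta}$ with $\eta=0$, and $\|f\|\ge$ a constant may fail for a reduced representation. I would handle this with $\eta\to0$: set $u_\epsilon=\log|g|-C$ and use $\|F\|\ge$ some positive lower bound, or else interpret the statement with $\eta=0$ meaning $e^u\le1$ as in Fujimoto. It is also harmless to replace $g$ by $g^{N}$ to secure multiplicities. This step, reconciling the normalization of $\|f\|$ in (C1), is the one I expect to require care; I would follow Fujimoto's convention that $\|f\|\ge1$ can be arranged locally, or cite \cite{Fujimoto83_1} directly.

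\textbf{Item (3).} Take
\[
u=\frac{\mu_0}{\mu}\log\frac{|Q(F)|}{\|Q\|'}.
\]
At each $\xi\in f^{-1}(D)$ we have $\nu\ge\mu$, so $\frac{\mu_0}{\mu}\nu\ge\mu_0\ge\min\{\nu,\mu_0\}$, which gives (C2). Condition (C1) holds with $\eta=\mu_0/\mu$, since $e^u\le\|F\|^{d\mu_0/\mu}$, and $e^u$ is smooth because $|Q(F)|^{2\mu_0/\mu}$ is continuous. If full $C^\infty$ is required, smooth only away from zeros, as Fujimoto does. Hence $\alpha^f_{\mu_0}(D)\le\mu_0/\mu$, which is \eqref{non-integrated vs ramification index}.
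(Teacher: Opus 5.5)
Your overall strategy is the standard one: explicit test functions $\log|Q(F)|$, $\frac{\mu_0}{\mu}\log|Q(F)|$, $\log|g|$, plus Jensen's formula for the comparison with $\delta^{[\mu_0]}_{f,D}$. The paper gives no proof and only cites \cite{Fujimoto83_1}. However, two steps fail as written.

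\textbf{The first case of item (2).} Here $u\equiv-\infty$ is not admissible. It is excluded by the requirement that $u$ be harmonic on $M\setminus f^{-1}(D)=M$, and Fujimoto explicitly demands $u\not\equiv-\infty$. This exclusion is necessary: if $u\equiv-\infty$ were allowed, every $\eta\ge0$ would satisfy $(\star)$ for every $f$ and $D$. Then $\delta^f_{\mu_0}(D)$ would always equal $1$, contradicting the upper bound in (1).

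Take $u\equiv0$ instead. For $\eta=0$ the right-hand side of (C1) is $\|f\|^{d\cdot 0}\equiv1$, whatever reduced representation is used. This also shows that the ``main subtlety'' you raise is not there: $u=\log(|g|/\sup_M|g|)$, with $g\not\equiv0$, satisfies $e^u\le1=\|f\|^{0}$ directly. Your suggested workaround via a positive lower bound for $\|F\|$ is unnecessary, and false in general, since a reduced representation is only determined up to a nowhere-vanishing holomorphic factor.

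If you keep the paper's clause ``harmonic on $M\setminus f^{-1}(D)$'', note that $\log|g|$ violates it at zeros of $g$ off $f^{-1}(D)$. Adding the Green potentials $\sum_a \ord_a(g)\,G_M(\cdot,a)$ of those extra zeros repairs this and keeps $u\le 0$. This uses the Riesz decomposition on $M$, which has a Green function once $g$ is nonconstant.

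\textbf{Smoothness in items (1) and (3).} In (3) you say $e^u=(|Q(F)|/\|Q\|')^{\mu_0/\mu}$ is smooth ``because'' it is continuous. That is a non sequitur, and the same problem affects $e^u=|Q(F)|/\|Q\|'$ in (1). Near $\xi\in f^{-1}(D)$ these functions have the form $|z-\xi|^{c}e^{h}$, with $h$ harmonic and $c=\frac{\mu_0}{\mu}\nu^f(D)(\xi)$ (respectively $c=\nu^f(D)(\xi)$). Such a function is $C^\infty$ only when $c\in 2\mathbb{Z}_{\ge0}$.

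No cleverer choice of $u$ can fix this. Every admissible $u$ is harmonic on a punctured neighbourhood of $\xi$ and subharmonic across it, so it has exactly this local form. Concretely, take $M=\mathbb{C}$, $f(\zeta)=[1:\zeta^\mu]$, $D=\{z_1=0\}\subset\mathbb{CP}^1$ and $\mu_0=1$. A literal $C^\infty$ requirement forces $u=c\log|\zeta|+h$ with $c\ge2$ and $h$ harmonic on $\mathbb{C}$. Then (C1) together with Liouville's theorem gives $\eta\ge 2/\mu$. So (3) would fail, and for $\mu=1$ even $\delta^f_1(D)\ge0$ would fail.

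Hence the proposition holds only under Fujimoto's original condition: $u$ a continuous $[-\infty,\infty)$-valued subharmonic function with $u\not\equiv-\infty$, i.e.\ $e^u$ merely continuous. You should state explicitly that this is the definition you verify. Under that reading your arguments for (1) and (3) go through.

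In the Jensen step, also record that $T_f(r)\to\infty$, which is automatic for nonconstant $f$ on $\mathbb{C}$ but must be assumed on a disc. When $u(0)=-\infty$, start the integral at some $r_0>0$.
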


In a more general way, Fujimoto \cite{Fujimoto83_1} introduced the following $C_p^*$-condition.
\begin{defi}
	For a positive integer $p$, the holomorphic map $f\colon M\rightarrow\mathbb{CP}^n$ satisfies the $C_p^*$-condition if there exists a subharmonic function $u$ on $M$ such that $e^u$ is of class $C^{\infty}$ and
	\[
	\lambda e^u\leq \|f\|^{p},
	\]
	where $\lambda$ is a positive real-valued function on $M$ with $\mathrm{d}s^2=\lambda^2|\mathrm{d}z|^2$ for a global holomorphic function $z$ on $M$ with $\mathrm{d}z\not=0$.
\end{defi}
\begin{rem}
	In the case where $x=(x_1,\dots,x_m)\colon M\rightarrow\mathbb{R}^m$ is a minimal surface immersed in $\mathbb{R}^m$, the generalized Gauss map is given by $f=[f_0:\dots:f_{m-1}]\colon M\to \mathbb{CP}^{m-1}$. Then the metric $\mathrm{d}s^2$ on $M$ induced from the standard metric on $\mathbb{R}^m$ is given by
	\[\mathrm{d}s^2=2\|f\|^2|\mathrm{d}z|^2.\]
	By definition, the map $f$ satisfies the $C_{1}^*$-condition. Similarly, concerning the conformal metric $\mathrm{d}s^2=2\|f\|^{2p}|\mathrm{d}z|^2$, such $f$ satisfies the $C_{p}^*$-condition.
\end{rem}

\begin{thm}\label{defect gauss map}
	Let $M$ be an open Riemann surface which is complete with respect to a conformal metric $\mathrm{d}s^2$.
	Let $D\subset \mathbb{CP}^n$ be a hypersurface of degree $d$. Suppose that there exists sufficiently large $m\gg 1$ and $\tilde{m}>2pm$ such that the subset
	\[ B\coloneq
	\bs	\big(
	E_{n,m}T_{\mathbb{C}\mathbb{P}^n}^*(\log D)
	\otimes
	\mathcal{O}_{\mathbb{C}\mathbb{P}^n}(1)^{-\widetilde{m}}
	\big)\cap \left(\mathbb{C}\mathbb{P}^n\setminus D\right)
	\]
	has at most zero dimension. Let $f\colon M\rightarrow \mathbb{CP}^n$ be a non-constant holomorphic map satisfying the $C_{p}^*$-condition and that $f(M)\not\subset D$. Then one has the estimate
	\[
	\delta_1^f(D)\leq 1-\dfrac{1}{d}\bigg(\dfrac{\widetilde{m}}{m}-2p\bigg).
	\]
\end{thm}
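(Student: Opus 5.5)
We argue by contradiction, following Fujimoto's method for non-integrated defects \cite{Fujimoto83_2} and its jet-differential version \cite[Theorem 1.5]{Cai-Ru-Yang25}. Suppose $\delta_1^f(D)>1-\frac{1}{d}(\frac{\widetilde m}{m}-2p)$. Then $\alpha_1^f(D)<\frac1d(\frac{\widetilde m}{m}-2p)$, so there is $\eta$ with $d\eta<\frac{\widetilde m}{m}-2p$ and a subharmonic $u$, harmonic off $f^{-1}(D)$, satisfying (C1) and (C2) with $\mu_0=1$.

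\textbf{Step 1: choosing a jet differential.} Since $f$ is non-constant and $f(M)\not\subset D$, the image $f(M)$ is not contained in the finite set $B$. Arguing as in the second part of Corollary~\ref{cor of ramification gauss maps}, we pick a section
\[
\mathscr P\in H^0\big(\mathbb{CP}^n,E_{n,m}T^*_{\mathbb{CP}^n}(\log D)\otimes\mathcal O(1)^{-\widetilde m}\big)
\]
with $\mathscr P(j_n(f))\not\equiv0$. By passing to the universal cover we may assume $M$ is the unit disc or $\mathbb C$. In a global coordinate $z$, write $P(z):=\mathscr P(j_n f)(z)\,(dz)^{-m}$. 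Using the trivialization \eqref{local expression of log jet, second form} together with $Q(F)$, where $Q$ is a defining polynomial of $D$, we write
\[
P=\frac{\Phi(F,F',\dots,F^{(n)})}{Q(F)^{m}},
\]
where $\Phi$ is holomorphic, homogeneous of degree $\widetilde m+dm$ in $F$, and of weight $m$ in the derivatives. The logarithmic nature of $\mathscr P$ gives poles of order at most $m$ along $f^{-1}(D)$, and each pole is at most simple in every logarithmic factor. Consequently $|P|\,\|F\|^{\widetilde m}$ behaves like $|Q(F)|^{-m}$ times a sum of products of terms $|(Q\circ f)^{(j)}/Q\circ f|$ and $|(F_i/F_0)^{(j)}/(F_i/F_0)|$.

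\textbf{Step 2: the flat metric.} Consider the pseudo-metric
\[
d\tau^2=\Big(\|F\|^{\widetilde m - m d\eta}\, e^{mu}\,|P|^{-1}\cdot\lambda^{m\cdot ?}\Big)^{2/m'}|dz|^2 .
\]
More concretely, we set
\[
v=\log\Big(\frac{\lambda^{m}\,e^{m u}\cdots}{|P|\,\|F\|^{\widetilde m-2pm-\dots}}\Big),
\]
with exponents fixed so that three things hold:
\begin{itemize}
\item[(i)] The combination $e^{m u}/|P|$ is continuous and harmonic away from a discrete set. This is where (C2) with $\mu_0=1$ is used: the at most simple log poles of $\mathscr P$ at each point of $f^{-1}(D)$ are absorbed by the zeros of $e^u$.
\item[(ii)] Via (C1) and the $C_p^*$-condition $\lambda e^{u'}\le\|F\|^p$, the metric is dominated by $\lambda^{2}$ times a bounded function. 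The exponent budget here is $\widetilde m - m d\eta > 2pm$, which forces $d\eta<\widetilde m/m-2p$. This explains the factor $2p$: one $p$ comes from comparing with $ds^2$, and the other from the exponent $2$ on $\lambda$.
\item[(iii)] The metric is flat wherever it is nonzero, since $\log$ of it is harmonic off a discrete set.
\end{itemize}
Then $d\tau^2\ge c\,ds^2$ fails only mildly, and we follow Fujimoto's scheme. Taking the developing map of the flat metric $d\tau$ near a point, we obtain a local isometry from a disc $\Delta_R$ in $(\mathbb C,|dw|)$. Maximality of $R$ together with completeness of $ds^2$ yields a divergent path $\gamma$ of finite $d\tau$-length.

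\textbf{Step 3: the contradiction (main obstacle).} We must show that $\int_\gamma ds<\infty$, contradicting completeness. To do this, we bound $ds$ by $d\tau$ times a factor controlled by logarithmic derivatives of $f\circ\Phi$ on $\Delta_R$. Following Fujimoto's pointwise lemma and \cite{Cai-Ru-Yang25}, we expect an inequality of the form
\[
\lambda\le C\,\Big(\frac{R}{R^2-|w|^2}\Big)^{\kappa}|dw/dz|
\]
along the developed disc. This is obtained by estimating $|P|\|F\|^{\widetilde m}/|Q(F)|^{m}$ with the pointwise version of the Logarithmic Derivative Lemma (Theorem~\ref{lem:ldl-disc-r} in the form $|g^{(k)}/g|\le C(R/(R^2-|w|^2))^{k}$ for suitable averaged quantities), and the strict inequality $d\eta<\widetilde m/m-2p$ is what makes $\kappa<1$. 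Integrating along $\gamma$, which has finite $|dw|$-length and stays in $\Delta_R$, then gives finite $ds$-length.

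Carrying out this pointwise logarithmic-derivative estimate for general log jet differentials, rather than for Wronskians, is the delicate step. I would follow the proof of \cite[Theorem 1.5]{Cai-Ru-Yang25} almost verbatim, replacing their Wronskian-type term by $\Phi/Q(F)^m$ and expanding it into logarithmic-derivative monomials.
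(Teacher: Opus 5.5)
\textbf{There is a genuine gap, and it is your Step~3, where all the real work lies.} The bound $\lambda\le C\big(R/(R^2-|w|^2)\big)^{\kappa}|dw/dz|$ on the developed disc does not follow from Theorem~\ref{lem:ldl-disc-r}. That result only controls circle averages $m_{g^{(k)}/g}(r)$, and only outside an exceptional set of radii. No pointwise version exists: for $g=e^{h}$ one has $g'/g=h'$, an arbitrary holomorphic function on $\Delta_R$, which can grow faster than any power of $R/(R^2-|w|^2)$. In Fujimoto's flat-metric method, the pointwise bound on the Wronskian term comes from the Ahlfors--Schwarz lemma. It is applied to a negatively curved pseudo-metric built from Cartan's associated curves and hyperplanes in general position. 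Your proposal produces no such pseudo-metric from a general logarithmic jet differential $\mathscr P$. The argument the paper adapts from \cite{Cai-Ru-Yang25} does not supply one either, because it uses only an integrated estimate.

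Step~2 is also not yet a construction:
\begin{itemize}
\item the exponents are left open;
\item the factors $\|F\|$ and $\lambda$ have non-harmonic logarithms, so $d\tau^2$ is not flat;
\item the pole absorption you describe in (i) works for the product $|P|e^{mu}$, not for $e^{mu}/|P|$. With $e^{mu}$ in the numerator, $d\tau$ degenerates on $f^{-1}(D)$, so a divergent path of finite $d\tau$-length could simply end there;
\item (ii) compares $d\tau$ and $ds$ in the direction opposite to the one you need.
\end{itemize}

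\textbf{How the paper avoids the problem.} It never needs a pointwise bound, because it replaces the divergent-path argument by Yau's theorem \cite{Yau76}, for which circle averages suffice.
\begin{itemize}
\item After passing to the universal cover, the cases $M=\mathbb{C}$, or $M=\Delta$ with $\limsup_{r\to1}T_f(r)/\log\frac{1}{1-r}=\infty$, are settled by Corollary~\ref{defect relation} together with $\delta_1^f(D)\le\delta^{[1]}_{f,D}$.
\item In the remaining case, take $u'$ from the $C_p^*$-condition and set $t=2p/(\widetilde m-\eta dm)$. Then $0<mt<1$ precisely because $d\eta<\widetilde m/m-2p$. The function $\varphi=2u'+t\big(\log(|\mathscr P(j_n(f))|/\|f\|^{\widetilde m})+mu\big)$ is subharmonic and satisfies $e^{\varphi}\lambda^2\le|\mathscr P(j_n(f))|^t$.
\item Since $mt<1$, the integrated logarithmic derivative estimate \cite[Lemma 2.2]{Cai-Ru-Yang25} bounds $\int_0^{2\pi}|\mathscr P(j_n(f))|^t(re^{i\theta})\,d\theta$ by a constant times $\big(\frac{1}{1-r}T_f(r)\big)^l$ with $l<1$, after a growth lemma removes the exceptional set.
\item The growth assumption $T_f(r)=O(\log\frac{1}{1-r})$ turns this into $K'(1-r)^{-l'}$ with $l'<1$. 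Hence $\iint_\Delta e^{\varphi}\lambda^2<\infty$, contradicting Yau's theorem for the complete metric $ds^2$.
\end{itemize}
To repair your proof, replace Steps~2--3 by this area argument, including the case split on the growth of $T_f$.
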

\begin{proof}
	Taking the universal covering $\Pi\colon\widetilde{M}\to M$ with the metric $\mathrm{d}\widetilde{s}^2=\Pi^*\left(\mathrm{d}s^2\right)$, it is clear that $(\widetilde{M},\mathrm{d}\widetilde{s}^2)$ is also complete. And the lifting map $\widetilde{f}\coloneq f\circ \Pi\colon \widetilde{M}\to \mathbb{CP}^n$ satisfies the $C_{p}^*$-condition  and $\widetilde{f}(\widetilde{M})\not\subset D$ as well. Therefore, we may assume that $M$ is simply connected. Hence, by the uniformization theorem, $M$ is either conformally equivalent to $\mathbb{C}$ or the unit disc $\Delta$. 
	
	When $M=\mathbb{C}$, or $M=\Delta$ but $\limsup\limits_{r\rightarrow 1}\frac{T_f(r)}{-\log (1-r)}=\infty$, it yields from Corollary~\ref{defect relation} that, for any $p\in \mathbb{Z}_{>0}$, one has
	\[\delta_1^f(D)\leq\delta_{f,D}^{[1]}\leq 1-\dfrac{\widetilde{m}}{md}<1-\dfrac{1}{d}\bigg(\dfrac{\widetilde{m}}{m}-2p\bigg).\]
	Hence we only need to study the case where $M=\Delta$ and $\limsup\limits_{r\rightarrow 1}\frac{T_f(r)}{-\log (1-r)}<\infty$.
	
	Suppose on the contrary that 
	\[
	\delta_1^f(D)> 1-\dfrac{1}{d}\bigg(\dfrac{\widetilde{m}}{m}-2p\bigg).
	\]
	By definition, there exists a non-negative number $\eta<\dfrac{1}{d}\bigg(\dfrac{\widetilde{m}}{m}-2p\bigg)$, and a subharmonic function $u$ on $M$ with $e^u$ of class $\mathcal{C}^\infty$, such that
	\begin{equation}
		\label{e^u vs f}
		e^u\leq \|f\|^{d\eta},
	\end{equation}
	and in a neighborhood of each point $\xi\in f^{-1}(D)$, the function
	\[
	u(z)-
	\log|z-\xi|
	\]
	is subharmonic, where $z$ is a local holomorphic coordinate around $\xi$.
	
	On the other hand, since the set $B$ has at most 0 dimension, by Corollary \ref{cor of ramification gauss maps}, there exists a nontrivial negatively twisted logarithmic $n$-jet differential
	\[
	\mathscr{P}
	\in
	H^0
	\big(
	\mathbb{C}\mathbb{P}^n,
	E_{k,m}T_{\mathbb{C}\mathbb{P}^n}^*(\log D)
	\otimes
	\mathcal{O}_{\mathbb{C}\mathbb{P}^n}(1)^{-\widetilde{m}}
	\big)
	\]
	with $\dfrac{\tilde{m}}{m}-2p>0$, such that $\mathscr{P}\big(j_k(f)\big)\not\equiv0$.
	Let $D_{\mathscr{P},f}$ denote the pole divisor of $\mathscr{P}(j_k(f))$. Then the local expression \eqref{local expression of log jet, second form} of $\mathscr{P}$ yields
	\[
	D_{\mathscr{P},f}\leq m\sum_{z\in \Delta}\min\{\ord_zf^*D,1\}.
	\]
	Consequently, the function
	\begin{equation}
		\label{function v}
		v:=\log \bigg|\dfrac{\mathscr{P}(j_k(f))}{\|f\|^{\tilde{m}}}\bigg|+mu
	\end{equation}
	is subharmonic on $\Delta$, where $u$ is defined in \eqref{e^u vs f}.
	
	Since $f$ satisfies the $C_p^*$-condition, there exists a subharmonic function $w$ on $\Delta$ such that $e^w$ is of class $C^{\infty}$ and
	\begin{equation}
		\label{using Cp condition}
		\lambda e^w\leq \|f\|^p,
	\end{equation}
	where $\lambda$ satisfies $\mathrm{d}s^2=\lambda^2|\mathrm{d}z|^2$. Putting $t=\frac{2p}{\tilde{m}-\eta dm}$, then since $\frac{\tilde{m}}{m}>2p$, one has $0<mt<1$. We then consider the following subharmonic function
	\[
	\varphi:=2w+tv=2w+t\left(\log \bigg|\dfrac{\mathscr{P}(j_k(f))}{\|f\|^{\tilde{m}}}\bigg|+mu\right).
	\]
	Combining \eqref{e^u vs f}, \eqref{function v}, \eqref{using Cp condition} together, one obtains that
	\begin{align*}
		e^{\varphi}\lambda^2
		\leq & \|f\|^{2p}e^{tv}\\
		\leq & \|f\|^{2p}e^{tmu}\bigg(\dfrac{|\mathscr{P}(j_k(f))|}{\|f\|^{\tilde{m}}}\bigg)^t\\
		\leq & \|f\|^{2p+\eta dmt}\bigg(\dfrac{|\mathscr{P}(j_k(f))|}{\|f\|^{\tilde{m}}}\bigg)^t=|\mathscr{P}(j_k(f))|^t.
	\end{align*}
	It follows from \cite[Lemma 2.2]{Cai-Ru-Yang25} that for $0<mt<l<1$ and $0<r_0<r<R<1$, there exists a constant $K_1$ such that
	\begin{align*}
		\int_{0}^{2\pi}e^{\varphi}\lambda^2 (re^{i\theta})d\theta & \leq \int_{0}^{2\pi}|\mathscr{P}(j_k(f))|^t (re^{i\theta})d\theta\\
		& \leq K_1\left(\frac{R}{r(R-r)}T_f(r)\right)^l.
	\end{align*}
	Taking $R=r+\frac{1-r}{e T_f(r)}$, by \cite[Lemma 4.1.7]{Ru2023}, we get $T_f(R)\leq eT_f(r)$ for all $r\in [0,1)$ excluding a set $E$ with $\int_E \frac{dt}{1-t} <\infty$. Thus we deduce that
	\begin{align*}
		\int_{0}^{2\pi}e^{\varphi}\lambda^2 (re^{i\theta})d\theta &\leq K_2\left(\frac{1}{1-r}T_f(r)\right)^l\\
		& \leq K_3 \frac{1}{(1-r)^l}\left(\log \frac{1}{1-r}\right)^l,
	\end{align*}
	for all $r\notin E$, where the last inequality holds since $\limsup\limits_{r\rightarrow 1}\frac{T_f(r)}{-\log (1-r)}<\infty$. After adjusting the parameters appropriately, we conclude from \cite[Proposition 4.1.8]{Ru2023} that, for all $r\in [0,1)$,
	\begin{align*}
		\int_{0}^{2\pi}e^{\varphi}\lambda^2 (re^{i\theta})d\theta &\leq K \frac{1}{(1-r)^l}\left(\log \frac{1}{1-r}\right)^l\\
		&\leq K' \frac{1}{(1-r)^{l'}},
	\end{align*}
	with $l<l'<1$ and suitable positive constants $K,K'$. Hence
	\[
	\iint_{\Delta}\big(e^{\varphi}\lambda^2\big)(re^{i\theta})rdrd\theta\leq \int_{0}^{1} K' \frac{r}{(1-r)^{p'}}dr<\infty,
	\]
	contradicting Yau's estimate \cite{Yau76} that $\iint_{\Delta} e^ud\sigma =\infty$.
\end{proof}
Combining this result and Proposition~\ref{properties of non-integrated defect}, we obtain the following.
\begin{cor}
	\label{ramification gauss map}
	Let $M$ be an open Riemann surface and let $f\colon M\rightarrow\mathbb{CP}^n$ be a holomorphic map. Suppose that $M$ is complete with respect to  the conformal metric given by
	\[
	\mathrm{d}s^2=\|F\|^{2p}|\omega|^2,
	\]
	where $F$ is a reduced representation of $f$ and $\omega$ is a holomorphic $1$-form on $M$. Let $D\subset \mathbb{CP}^n$ be a hypersurface of degree $d$ such that $f(M)\not\subset D$.  Suppose that there exists sufficiently large $m\gg 1$ and $\tilde{m}>2pm$ such that the subset
	\[
	\bs	\big(
	E_{n,m}T_{\mathbb{C}\mathbb{P}^n}^*(\log D)
	\otimes
	\mathcal{O}_{\mathbb{C}\mathbb{P}^n}(1)^{-\widetilde{m}}
	\big)\cap \left(\mathbb{C}\mathbb{P}^n\setminus D\right)
	\]
	has at most zero dimension. Then the following statements hold:
	\begin{enumerate}
		\item $	\delta_1^f(D)\leq 1-\dfrac{1}{d}\bigg(\dfrac{\widetilde{m}}{m}-2p\bigg).$
		\item If $f$ is ramified over $D$ with multiplicity at least $\mu$, then
		\[
		\dfrac{1}{\mu}\geq\dfrac{1}{d}\bigg(\dfrac{\widetilde{m}}{m}-2p\bigg).
		\]
		\item In particular, if the image of $f$ avoids $D$, then $f$ must be constant.
	\end{enumerate}
	
\end{cor}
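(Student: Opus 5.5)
Everything here reduces to Theorem~\ref{defect gauss map} and Proposition~\ref{properties of non-integrated defect}; the only thing to check is that $f$ satisfies the $C_p^*$-condition.

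\emph{Step 1 ($C_p^*$-condition).} On a coordinate chart $z$ write $\omega=h\,\dif z$, so that $\lambda=\|F\|^p|h|$. Away from the zeros of $\omega$, put $u:=-\log|h|$. This is harmonic, $e^u$ is smooth, and $\lambda e^u=\|F\|^p$, which is the inequality in the $C_p^*$-condition.

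There are two gaps. First, the definition asks for a global coordinate $z$. Second, $\omega$ may have zeros, whereas a conformal metric must be nondegenerate, so one may assume $\omega$ has no zeros. Passing to the universal cover, as in the proof of Theorem~\ref{defect gauss map}, makes $\widetilde M$ either $\mathbb{C}$ or $\Delta$. There $\Pi^*\omega=h\,\dif z$ globally with $h$ nowhere vanishing, so $u=-\log|h|$ is a global harmonic function. Completeness, the condition $f(M)\not\subset D$, the non-integrated defect (its upper bound is what we use) and the ramification hypothesis all lift to the cover. So the proof of Theorem~\ref{defect gauss map} applies verbatim. This verification is the main point to be careful about; everything after it is formal.

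\emph{Step 2 (statement (1)).} Theorem~\ref{defect gauss map} now gives directly
\[
\delta_1^f(D)\leq 1-\frac1d\Big(\frac{\widetilde m}{m}-2p\Big).
\]

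\emph{Step 3 (statement (2)).} Suppose $f$ is ramified over $D$ with multiplicity at least $\mu$. Apply Proposition~\ref{properties of non-integrated defect}(3) with $\mu_0=1\le\mu$ to get $\delta_1^f(D)\ge 1-\frac1\mu$. Combining this with (1) gives $\frac1\mu\ge \frac1d\big(\frac{\widetilde m}{m}-2p\big)$.

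\emph{Step 4 (statement (3)).} If $f(M)\cap D=\emptyset$, then Proposition~\ref{properties of non-integrated defect}(2) gives $\delta_1^f(D)=1$. (Equivalently, $f$ is ramified with every multiplicity $\mu$, and we let $\mu\to\infty$ in (2).) Statement (1) would then force $\frac1d\big(\frac{\widetilde m}{m}-2p\big)\le 0$, contradicting $\widetilde m>2pm$. The only hypothesis of Theorem~\ref{defect gauss map} left unverified is that $f$ is non-constant, so $f$ must be constant.
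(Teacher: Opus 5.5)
Your proposal is correct and follows the paper's route: the paper obtains the corollary simply by combining Theorem~\ref{defect gauss map} with Proposition~\ref{properties of non-integrated defect}, and your check of the $C_p^*$-condition via $u=-\log|h|$ on the universal cover makes explicit what the paper leaves to its remark after the definition. As in the paper, items (1) and (2) tacitly require $f$ to be non-constant. For instance, a constant map $f\equiv Q\notin D$ on $M=\mathbb{C}$ with $\omega=\mathrm{d}z$ gives a complete metric but has $\delta_1^f(D)=1$, so state this assumption explicitly in Steps 2--3, not only in Step 4.
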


\section{A curvature estimate}
Theorem~\ref{gauss curvature estimate} can be proved by following the similar arguments as in the proof of \cite[Theorem 2]{Chen-Li-Liu-Ru21}.
Firstly, we consider the normal family of holomorphic maps ramified over a large degree hypersurface with high multiplicity.
\subsection{Normal family of holomorphic maps 
}
Denote by $\hol(\Delta,\mathbb{CP}^n)$ the set of all holomorphic maps from $\Delta$ to $\mathbb{CP}^n$.
\begin{defi}
	Let $\mathcal{F}\subset \hol(\Delta,\mathbb{CP}^n)$ be a family of holomorphic maps. We say $\mathcal{F}$ is normal if it is relatively compact in $\hol(\Delta,\mathbb{CP}^n)$ with respect to the compact-open topology.
\end{defi}
The following criterion of normality for a family of holomorphic maps will be useful in our proof.
\begin{lem}[\cite{ALADRO19911}]\label{lem:normal criterion}
	The family $\mathcal{F}\subset \hol(\Delta,\mathbb{CP}^n)$ is not normal if and only if there exist a compact set $K\Subset \Delta$, and a sequence of points $\{p_i\}_{i\geq 1}\subset K$ with $\{p_i\}\to p_0\in \Delta$, a sequence of holomorphic maps $\{f_i\}_{i\geq 1}\subset \mathcal{F}$, a sequence of parameters $\{\rho_i\}_{i\geq 1}\subset \mathbb{R}_{>0}$ with $\{\rho_i\}\to 0$ such that, the sequence of maps 
	\[g_i(\zeta)\coloneq f_i(p_i+\rho_i \zeta),\quad \zeta\in \mathbb{C},\]
	converges uniformly on compact subsets in $\mathbb{C}$ to a nonconstant holomorphic map $g\colon \mathbb{C}\to \mathbb{CP}^n$.
\end{lem}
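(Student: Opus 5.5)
We argue both directions using the Fubini--Study length of the derivative, i.e. $f^\#(z):=|f'(z)|_{FS}$, where $|\cdot|_{FS}$ is the Fubini--Study norm on $T\mathbb{CP}^n$. The first ingredient is a Marty-type criterion: since $\mathbb{CP}^n$ is compact, Arzel\`a--Ascoli shows that $\mathcal{F}\subset\hol(\Delta,\mathbb{CP}^n)$ is normal if and only if $\{f^\#: f\in\mathcal{F}\}$ is uniformly bounded on every compact subset of $\Delta$. Uniform bounds on $f^\#$ give equicontinuity with respect to the Fubini--Study distance. Conversely, a locally uniform limit of holomorphic maps is holomorphic, by working in affine charts, since locally uniform limits of holomorphic functions are holomorphic. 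Derivatives then also converge locally uniformly by Cauchy estimates in charts, which gives the bound on $f^\#$ when the family is normal.

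\emph{If part.} Suppose rescalings $g_i(\zeta)=f_i(p_i+\rho_i\zeta)$ converge to a nonconstant $g$, and assume for contradiction that $\mathcal{F}$ is normal. Then $f_i^\#\le C$ on a fixed compact neighbourhood $L$ of $p_0$, and $p_i+\rho_i\zeta\in L$ for each fixed $\zeta$ and large $i$. Hence
\[
g_i^\#(\zeta)=\rho_i f_i^\#(p_i+\rho_i\zeta)\le C\rho_i\to0 .
\]
By locally uniform convergence of derivatives, $g^\#\equiv0$, so $g$ is constant, a contradiction.

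\emph{Only if part (Zalcman's rescaling).} If $\mathcal{F}$ is not normal, the Marty criterion gives $r<1$ and $f_i\in\mathcal{F}$ with $\sup_{|z|\le r}f_i^\#\to\infty$. Set $h_i(z)=(r-|z|)f_i^\#(z)$ on $\overline{\Delta}_r$; it vanishes on the boundary. Choose $p_i$ maximizing $h_i$, and put $M_i=h_i(p_i)$, which tends to $\infty$ (compare with $h_i$ on $\overline\Delta_{r'}$, $r'<r$, and use the previous step). Define $\rho_i=1/f_i^\#(p_i)=(r-|p_i|)/M_i\to0$, take $K=\overline{\Delta}_r$, and pass to a subsequence with $p_i\to p_0$.

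The map $g_i(\zeta)=f_i(p_i+\rho_i\zeta)$ is defined for $|\zeta|<M_i\to\infty$ and satisfies $g_i^\#(0)=1$. By maximality of $h_i$, for $|\zeta|<M_i$,
\[
g_i^\#(\zeta)=\rho_i f_i^\#(p_i+\rho_i\zeta)\le \frac{\rho_i(r-|p_i|)\,f_i^\#(p_i)}{r-|p_i|-\rho_i|\zeta|}=\frac{1}{1-|\zeta|/M_i},
\]
which is locally uniformly bounded on $\mathbb{C}$. Arzel\`a--Ascoli and a diagonal argument give a subsequence converging locally uniformly on $\mathbb{C}$ to a holomorphic $g$. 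Since $g^\#(0)=\lim g_i^\#(0)=1$, the map $g$ is nonconstant.

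\emph{Main obstacle.} The main technical point is the Marty criterion in $\mathbb{CP}^n$, specifically the convergence of derivatives under locally uniform convergence; handling this via affine charts and Cauchy estimates should suffice. The rest is the standard Zalcman maximization.
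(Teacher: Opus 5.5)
Your proof is correct. The paper gives no proof of this lemma and simply quotes Aladro--Krantz, whose result (for sources in $\mathbb{C}^n$, which is why their version carries extra unit direction vectors that are superfluous on $\Delta$) rests on the same Marty-criterion-plus-Zalcman-rescaling scheme you use.

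One detail in the ``only if'' part needs adjusting. To force $M_i=\max h_i\to\infty$, choose $r$ strictly larger than the radius $r'$ of a disc $\overline{\Delta}_{r'}$ on which $\sup f_i^\#\to\infty$. Then $M_i\geq (r-r')\sup_{\overline{\Delta}_{r'}}f_i^\#$. If you take $r$ equal to the radius supplied by Marty's criterion, the blow-up could concentrate near $|z|=r$, and $M_i$ need not diverge.
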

\begin{thm}
	\label{normal family}
	Let $D\subset\mathbb{CP}^n$ be a Kobayashi-hyperbolic hypersurface of degree $d$. Let $\mathcal{F}\subset \hol(\Delta,\mathbb{CP}^n)$ be a family of non-constant holomorphic mappings from $\Delta$ into $\mathbb{CP}^n$. Suppose that each $f\in \mathcal{F}$ is ramified over $D$ with multiplicity at least $\mu$, and that there exists a nontrivial negatively twisted logarithmic $k$-jet differential
	\[
	\mathscr{P}
	\in
	H^0
	\big(
	\mathbb{C}\mathbb{P}^n,
	E_{k,m}T_{\mathbb{C}\mathbb{P}^n}^*(\log D)
	\otimes
	\mathcal{O}_{\mathbb{C}\mathbb{P}^n}(1)^{-\widetilde{m}}
	\big)
	\]
	such that
	\begin{align*}
		\mathscr{P}\big(j_k(f)
		\big)
		\not\equiv
		0.
	\end{align*}
	If $\mu>\dfrac{md}{\widetilde{m}}$, then $\mathcal{F}$ is a normal family.
	
\end{thm}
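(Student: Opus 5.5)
We argue by contradiction, using the Zalcman-type criterion of Lemma~\ref{lem:normal criterion}. Suppose $\mathcal{F}$ is not normal. Then there are points $p_i\to p_0\in\Delta$, maps $f_i\in\mathcal{F}$ and radii $\rho_i\to 0$ such that $g_i(\zeta)=f_i(p_i+\rho_i\zeta)$ converges locally uniformly on $\mathbb{C}$ to a nonconstant entire curve $g\colon\mathbb{C}\to\mathbb{CP}^n$. The aim is to show that $g$ cannot exist.

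\textbf{Step 1: $g(\mathbb{C})\not\subset D$.} Since $D$ is Kobayashi hyperbolic, every holomorphic map $\mathbb{C}\to D$ is constant. As $g$ is nonconstant, $g(\mathbb{C})\not\subset D$.

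\textbf{Step 2: $g$ is ramified over $D$ with multiplicity at least $\mu$.} Let $Q$ be the defining polynomial of $D$. Choose reduced representations $G_i$ of $g_i$ converging locally uniformly to a reduced representation $G$ of $g$; this can be done locally by normalizing one coordinate that does not vanish at the limit. Then $Q(G_i)\to Q(G)\not\equiv 0$ locally uniformly. Let $\zeta_0$ be a zero of $Q(G)$. By Hurwitz's theorem, for large $i$ every zero of $Q(G_i)$ near $\zeta_0$ converges to $\zeta_0$, and the total multiplicity of these zeros equals $\ord_{\zeta_0}Q(G)$. Each zero of $Q(G_i)$ has multiplicity at least $\mu$, because the reparametrization $\zeta\mapsto p_i+\rho_i\zeta$ is biholomorphic and so preserves the ramification of $f_i$. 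Hence $\ord_{\zeta_0}Q(G)\ge\mu$.

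\textbf{Step 3: $\mathscr{P}(j_k(g))\not\equiv 0$.} This is the main obstacle, because the hypothesis $\mathscr{P}(j_k(f))\not\equiv 0$ concerns the maps $f\in\mathcal{F}$ and does not obviously pass to the limit $g$. Note that $\mathscr{P}(j_k(g_i))=\rho_i^m\,\mathscr{P}(j_k(f_i))(p_i+\rho_i\zeta)$ by weighted homogeneity, so after rescaling by $\rho_i^{-m}$ the limit could still degenerate. I would try to obtain nondegeneracy of $g$ directly from the base-locus information, in the way the second half of Corollary~\ref{cor of ramification gauss maps} does. By the base-locus assumption, $\bs(\cdot)\cap(\mathbb{CP}^n\setminus D)$ is a finite set. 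Since $g$ is nonconstant and $g(\mathbb{C})\not\subset D$, the connected set $g(\mathbb{C})\setminus D$ is not contained in this finite set. So some section $\mathscr{P}'$ of the same bundle does not vanish at a point $g(\zeta_1)\notin D$. If a single section at a point does not control the jet, I would choose $\mathscr{P}'$ among the finitely many generators so that $\mathscr{P}'(j_k(g))\not\equiv 0$. I would then apply the subsequent argument with $\mathscr{P}'$ in place of $\mathscr{P}$. This is the step where the argument is least certain.

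\textbf{Step 4: contradiction.} With a jet differential that does not vanish identically on $j_k(g)$, Theorem~\ref{smt from jet} applies in the case $R=\infty$, and Corollary~\ref{ramification} yields $\mu\le md/\widetilde{m}$. This contradicts the hypothesis $\mu>md/\widetilde{m}$. Therefore no such $g$ exists, and $\mathcal{F}$ is normal.
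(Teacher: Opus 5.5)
Your Steps 1, 2 and 4 are precisely the paper's proof. Lemma~\ref{lem:normal criterion} produces the entire limit $g$. Hurwitz gives the dichotomy ``$g(\mathbb{C})\subset D$ or $g$ is ramified over $D$ with multiplicity $\ge\mu$''. Hyperbolicity rules out the first case and Corollary~\ref{ramification} (with $R=\infty$) the second. The paper has no Step~3: it applies Corollary~\ref{ramification} to $g$ directly, tacitly assuming $\mathscr{P}(j_k(g))\not\equiv 0$. You are right that this does not follow from the hypothesis on $\mathcal{F}$. Non-vanishing is not preserved under locally uniform limits, and the factor $\rho_i^m\to 0$ in $\mathscr{P}(j_k(g_i))=\rho_i^m\,\mathscr{P}(j_k(f_i))(p_i+\rho_i\zeta)$ gives no lower bound. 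Nothing in the statement prevents the Zalcman limit from solving the differential equation $\mathscr{P}=0$. So the step you single out is a genuine gap, and it is a gap in the paper's argument as well as yours.

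Your repair does not yet close it, for two reasons. First, the statement contains no base-locus hypothesis; finiteness of $\bs(\cdot)\cap(\mathbb{CP}^n\setminus D)$ is assumed only in Theorem~\ref{gauss curvature estimate}. You are therefore proving the criterion under an extra hypothesis. That is harmless for the paper's application, where it is available, but neither your argument nor the paper's reaches the theorem exactly as stated.

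Second, even granting that hypothesis, the inference ``some section is nonzero at the point $g(\zeta_1)$, hence some section is nonzero on $j_k(g)$'' fails if $\bs$ means the base locus of the vector bundle $E_{k,m}T^*_{\mathbb{CP}^n}(\log D)\otimes\mathcal{O}_{\mathbb{CP}^n}(1)^{-\widetilde m}$ on $\mathbb{CP}^n$. A section that is nonzero in the fibre over $x$ is only a nonzero weighted-homogeneous polynomial on the $k$-jets at $x$. All global sections, in particular your finitely many generators, may vanish at the specific jet $j_k(g)(\zeta_1)$, and choosing among them does not help. What you need is that $B$ be the image in $\mathbb{CP}^n$ of the set of \emph{regular} $k$-jets annihilated by every global section. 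The second half of Corollary~\ref{cor of ramification gauss maps}, whose argument you imitate, needs the same reading.

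With that reading the step goes through. Since $g$ is nonconstant and $g(\mathbb{C})\not\subset D$, the sets $g^{-1}(D)$, $g^{-1}(B)$ and $\{g'=0\}$ are discrete, so you can choose $\zeta_1$ outside all three. Some section $\mathscr{P}'$ of the same bundle is then nonzero on the regular jet $j_k(g)(\zeta_1)$, so $\mathscr{P}'(j_k(g))\not\equiv 0$. Corollary~\ref{ramification} applied to $g$ and $\mathscr{P}'$ (same $m,\widetilde m$) gives $\mu\le md/\widetilde m$, the desired contradiction.
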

\begin{proof}
	Suppose on the contrary that $\mathcal{F}$ is not normal. Then by Lemma \ref{lem:normal criterion}, there exists a sequence of holomorphic mappings $\{f_i\}_i\subset\mathcal{F}$, a sequence of points $\{p_i\}_i\subset K\Subset \Delta$ with $\{p_i\}_i\to p_0\in \Delta$, a sequence of positive numbers $\{\rho_i\}_i\to 0$ such that the sequence of holomorphic maps
	\begin{align*}
		g_i\colon \Delta_{r_i}&\rightarrow\mathbb{CP}^n\,\quad (r_i\rightarrow\infty),\\
		z&\mapsto f_i(p_i+\rho_iz),
	\end{align*}
	converges uniformly on compact sets of $\mathbb{C}$ to a non-constant holomorphic map $f\colon \mathbb{C}\rightarrow\mathbb{CP}^n$, after passing to a subsequence. By Hurwitz's Theorem, either the image of $f$ is contained in $D$, or $f$ is ramified over $D$ with multiplicity at least $\mu$. The first case contradicts the hyperbolicity of $D$, while the second contradicts Corollary~\ref{ramification}. Therefore, $\mathcal{F}$ is normal on $\Delta$.
\end{proof}
\subsection{Technical lemmas}
We collect some necessary results to prove our main theorem.
\begin{lem}{\cite[Theorem 3]{Chen-Li-Liu-Ru21}}
	\label{curvature estimate, g omits neigh. of hyperplane}
	Let $M$ be an open Riemann surface and let $f\colon M\rightarrow\mathbb{CP}^n$ be a non-constant holomorphic map. Consider the conformal metric on $M$ given by
	\[
	\mathrm{d}s^2=\|F\|^{2p}|\omega|^2,
	\]
	where $F=[f_0\cdots:f_n]$ is a reduced representation of $f$ and $\omega$ is a holomorphic $1$--form on $M$. If $f$ omits a neighborhood $U$ of a hyperplane in $\mathbb{CP}^n$, then there exists a constant $C$ depending only on $U$ such that
	\[
	|K(q)|^{\frac{1}{2}}\mathrm{d}(q)\leq C,
	\]
	where $K(q)$ is the Gauss curvature of the surface at $q$ and $\mathrm{d}(q)$ is the geodesic distance from $q$ to the boundary of $M$.
\end{lem}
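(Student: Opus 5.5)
The plan is to normalize the omitted hyperplane to $H=\{z_0=0\}$ and pass to the affine coordinates $g_i=f_i/f_0$, which are bounded holomorphic functions. In these coordinates $\mathrm{d}s^2$ is uniformly comparable to a flat metric, so the curvature estimate reduces to a Cauchy estimate on a Euclidean disc whose radius is comparable to $\mathrm{d}(q)$.

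\emph{Normalization.} After a unitary change of homogeneous coordinates (which leaves $\|F\|$ and $K$ unchanged) we may assume the hyperplane is $H=\{z_0=0\}$. Since $f(M)$ omits the neighborhood $U$, there is $\varepsilon=\varepsilon(U)>0$ with
\[
|f_0|\geq \varepsilon\|F\| \quad\text{on } M .
\]
In particular $f_0$ has no zeros, so the functions $g_i:=f_i/f_0$ ($1\le i\le n$) are holomorphic on $M$ and satisfy $|g_i|\le 1/\varepsilon$. Moreover $\omega$ has no zeros, since $\mathrm{d}s^2$ is a metric.

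\emph{Curvature formula.} In a local coordinate $z$, write $\omega=h\,\mathrm{d}z$, so that $\lambda=\|F\|^p|h|$. Since $\log|h|$ is harmonic,
\[
K=-\frac{\Delta\log\lambda}{\lambda^2}=-\frac{p\,\Delta\log\|F\|}{\|F\|^{2p}|h|^2}.
\]
Now introduce the holomorphic function $w$ with $\mathrm{d}w=f_0^{\,p}\,\omega$; this is nonvanishing, and $w$ is single-valued after passing to the universal cover, which does not affect $K$ or $\mathrm{d}(q)$. In terms of $w$ the metric becomes
\[
\mathrm{d}s^2=\big(1+\textstyle\sum_i|g_i|^2\big)^p|\mathrm{d}w|^2,
\]
and the conformal factor lies between $1$ and $A:=(1+n\varepsilon^{-2})^p$. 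The same computation in the coordinate $w$, using the Fubini--Study formula for $\Delta\log(1+|g|^2)$, gives
\[
|K|\leq c_p\,\frac{\sum_i|g_i'(w)|^2}{(1+|g|^2)^{p+1}}\leq c_p\sum_i|g_i'(w)|^2,
\]
where $g_i'$ denotes differentiation with respect to $w$.

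\emph{Disc construction (main step).} Fix $q$ and set $\mathrm{d}=\mathrm{d}(q)$. Since $w$ is a local biholomorphism, invert it near $q$ and try to continue the inverse $\Phi$ along every ray from $w(q)$ in the $w$-plane. If continuation along some ray fails at Euclidean length $t$, then the image curve in $M$ is divergent, so its $\mathrm{d}s$-length is at least $\mathrm{d}$. On the other hand, that length is at most $A^{1/2}t$. Hence $\Phi$ extends to the disc $\Delta(w(q),R)$ with $R=\mathrm{d}/A^{1/2}$. This step is where I expect the most care to be needed: one must argue via a maximal disc of continuation and handle the divergence-of-path argument correctly, as in Fujimoto and Osserman--Ru. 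I will follow that standard lemma.

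\emph{Conclusion.} On the disc $\Delta(w(q),R)$ the functions $g_i\circ\Phi$ are holomorphic and bounded by $1/\varepsilon$. Cauchy's estimate gives $|g_i'(w(q))|\le 1/(\varepsilon R)$. Combining with the curvature bound,
\[
|K(q)|\leq \frac{c_p\,n}{\varepsilon^2R^2}=\frac{c_p\,n\,A}{\varepsilon^2\,\mathrm{d}^2},
\]
that is, $|K(q)|^{1/2}\mathrm{d}(q)\le C$ with $C$ depending only on $\varepsilon$, i.e.\ on $U$ (together with the fixed data $n,p$).
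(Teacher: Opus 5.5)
The paper does not prove this lemma; it only cites Chen--Li--Liu--Ru. Your argument is correct and is essentially the standard proof of that cited result: you normalize $H=\{z_0=0\}$ so that $|f_0|\ge\varepsilon\|F\|$, compare $\mathrm{d}s^2$ with the flat metric $|f_0^p\omega|^2$, use Fujimoto's maximal-disc lemma to get a disc of radius at least $A^{-1/2}\mathrm{d}(q)$, and combine Cauchy's estimate for the bounded functions $f_i/f_0$ with $|K|\le 2p\sum_i|g_i'|^2$. The one claim you leave unjustified, that passing to the universal cover does not change $\mathrm{d}(q)$, is true because a finite-length divergent curve upstairs projects to a divergent curve; you can also avoid it by applying Fujimoto's lemma to the flat metric directly on $M$.
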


\begin{lem}\cite[Lemma 2.1]{Osserman-Ru97}
	\label{compare with the hyperbolic distance} Fix a radius $0<r<1$. Let $R$ denote the hyperbolic radius of the disc $\Delta_r\subsetneq \Delta$. Consider a conformal metric $\mathrm{d}s^2=\lambda^2(z)|\mathrm{d}z|^2$  on $\Delta_r$ such that the geodesic distance from the origin to the boundary $\left\{|z|=r\right\}$ is greater than or equal to $R$. If the Gauss curvature under the metric $\mathrm{d}s^2$ satisfies $-1\leq K\leq 0$, then the distance from any point to the origin with respect to the metric $\mathrm{d}s^2$ is greater than or equal to the hyperbolic distance.
\end{lem}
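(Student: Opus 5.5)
The plan is to compare the distance function of $\mathrm{d}s^2$ from the origin with the hyperbolic distance through a superharmonic-function argument (a Laplacian comparison). The key identity is that for the Poincaré metric on $\Delta$ the hyperbolic distance $\rho_h$ from $0$ satisfies $\tanh(\rho_h(z)/2)=|z|$, so $\log\tanh(\rho_h/2)=\log|z|$ is harmonic away from $0$.

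Let $\rho_s(z)$ denote the $\mathrm{d}s^2$-distance from $0$ to $z$, and set $F(\rho)=\log\tanh(\rho/2)$. It suffices to prove
\[
w(z):=F(\rho_s(z))-\log|z|\ \geq\ 0
\]
on $\Delta_r\setminus\{0\}$. Since $F$ is increasing, this gives $\rho_s\geq\rho_h$.

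\textbf{Step 1: regularity of $\rho_s$.} Let $B=\{\rho_s<R\}$. By hypothesis every geodesic issued from $0$ with length $<R$ stays inside $\Delta_r$, so $\exp_0$ is defined on the disc of radius $R$ in $T_0\Delta_r$. Since $K\leq 0$ there are no conjugate points, so $\exp_0$ is a local diffeomorphism there. Because $\Delta_r$ is simply connected, a Cartan--Hadamard type argument (lifting paths through the local diffeomorphism) shows that $\exp_0$ is a diffeomorphism onto $B$. Hence $\rho_s$ is smooth on $B\setminus\{0\}$, and in geodesic polar coordinates we can write $\mathrm{d}s^2=\mathrm{d}\rho^2+G(\rho,\theta)^2\mathrm{d}\theta^2$ with $G_{\rho\rho}=-KG$. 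I expect this step to be the main technical obstacle, because the metric is not complete and one must check carefully that the distance-to-boundary hypothesis replaces completeness. If it proves troublesome, a fallback is to work with $\rho_s$ in the barrier (viscosity) sense, as in Calabi's argument.

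\textbf{Step 2: superharmonicity.} Since $K\geq -1$, Sturm comparison applied to $G_{\rho\rho}=-KG\leq G$ (with $G(0)=0$, $G_\rho(0)=1$) gives $G_\rho/G\leq\coth\rho$. This is the Laplacian comparison $\Delta_s\rho_s\leq\coth\rho_s$. Using $F'=1/\sinh\rho$ and $F''=-\cosh\rho/\sinh^2\rho$, we get
\[
\Delta_s F(\rho_s)=F''+F'\,\Delta_s\rho_s\leq-\frac{\cosh\rho_s}{\sinh^2\rho_s}+\frac{\coth\rho_s}{\sinh\rho_s}=0 .
\]
Because the metric is conformal, the Euclidean Laplacian equals $\lambda^2\Delta_s$, so $F(\rho_s)$ is superharmonic on $B\setminus\{0\}$. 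Since $\log|z|$ is harmonic, $w$ is superharmonic there as well.

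\textbf{Step 3: boundary behaviour and the minimum principle.} Near $0$ we have $\rho_s\sim\lambda(0)|z|$, so $w\to\log(\lambda(0)/2)$. Thus $w$ is bounded near $0$, and the singularity at $0$ is removable for superharmonic functions. On $\partial B\cap\Delta_r$ we have $\rho_s=R$, so $F(\rho_s)=F(R)=\log r\geq\log|z|$, giving $w\geq 0$ there. Where $B$ reaches $\{|z|=r\}$ (if it does), $\rho_s\geq R$ on that circle by hypothesis, so $\liminf w\geq 0$ there too. The minimum principle therefore yields $w\geq 0$ on $B$. Outside $B$ we have $\rho_s\geq R=\rho_h(r)>\rho_h(z)$ trivially. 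Hence $\rho_s\geq\rho_h$ on all of $\Delta_r$, which is the claim.
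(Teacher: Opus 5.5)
The paper does not prove this lemma; it is quoted from Osserman--Ru, so your proposal has to stand on its own. The core of your argument is sound. With $F(\rho)=\log\tanh(\rho/2)$ you have $F'=1/\sinh\rho$ and $F''=-\cosh\rho/\sinh^2\rho$. The Sturm argument $\bigl(G_\rho\sinh\rho-G\cosh\rho\bigr)'=(-K-1)G\sinh\rho\le 0$ gives $G_\rho/G\le\coth\rho$. Superharmonicity is conformally invariant in dimension two, and $\tanh(R/2)=r$ is exactly what makes the boundary comparison in Step 3 work. This is the Laplacian-comparison proof of a localized Yau--Schwarz lemma, and Steps 2 and 3 are fine.

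\textbf{The gap is in Step 1.} Its conclusion is true, but the justification you sketch does not work. The Cartan--Hadamard argument needs two things you do not have here.
\begin{itemize}
\item Completeness of the pulled-back metric on $\{|v|<R\}$, so that $\exp_0$ is a covering. This fails: $\{|v|<R\}$ with $\exp_0^*\mathrm{d}s^2$ is incomplete, and only paths from $0$ of length $<R$ are guaranteed to lift.
\item Simple connectivity of the \emph{image}. This is not available: simple connectivity of $\Delta_r$ does not pass to the open subset $B$. A homotopy in $\Delta_r$ between two geodesics from $0$ with the same endpoint may run through long paths that cannot be lifted.
\end{itemize}

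\textbf{A repair for Step 1.} A short proof uses exactly $K\le 0$ and the topology of $\Delta_r$. Suppose $\exp_0(v_1)=\exp_0(v_2)$ with $v_1\ne v_2$. Pass to a first self-intersection or first mutual intersection to obtain a simple closed curve made of one or two geodesic arcs, with exterior angles $\theta_i\in(-\pi,\pi)$. By the Jordan curve theorem it bounds a disc $\Omega\subset\Delta_r$. Gauss--Bonnet then gives $\int_\Omega K\,\mathrm{d}A=2\pi-\sum_i\theta_i>0$, contradicting $K\le 0$.

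You also need that every $q$ with $\rho_s(q)<R$ is joined to $0$ by a minimizing geodesic. This is used both for surjectivity of $\exp_0$ onto $B$ and in Step 3. The usual Hopf--Rinow argument goes through verbatim, because $\exp_0$ is defined on $\{|v|<R\}$. It also supplies what your phrase ``$\rho_s\ge R$ on that circle'' should mean, namely $\liminf_{|z|\to r}\rho_s(z)\ge R$, since $\exp_0(\{|v|\le R-\varepsilon\})$ is a compact subset of $\Delta_r$.

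\textbf{Your Calabi fallback also works and bypasses Step 1 entirely.} Take $q\in B\setminus\{0\}$ and a minimizing geodesic $\gamma$ from $0$ to $q$. Let $V$ be a neighbourhood of $(\rho_s(q)-\varepsilon)\dot\gamma(\varepsilon)$ on which $\exp_{\gamma(\varepsilon)}$ is a diffeomorphism; this exists because $\gamma$ minimizes, so there are no conjugate points. Then $\varepsilon+\bigl|(\exp_{\gamma(\varepsilon)}|_V)^{-1}(x)\bigr|$ is an upper barrier for $\rho_s$ at $q$, with Laplacian at most $\coth(\rho_s(q)-\varepsilon)$ there. Hence $F(\rho_s)$ is superharmonic in the barrier sense, and the minimum principle applies. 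The origin is handled by adding $\eta\log(r/|z|)$ and letting $\eta\to 0$.

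This version uses only $K\ge -1$. So the hypothesis $K\le 0$ in the lemma only buys smoothness of $\rho_s$; it is not needed for the conclusion.
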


\begin{lem}\cite[Lemma 2.2]{Osserman-Ru97}
	\label{compare with the hyperbolic distance, after passing limit} 
	Let $\left\{\mathrm{d}s_i^2\right\}_{i\geq 1}$ be a sequence of conformal metrics on the unit disc $\Delta$, whose Gauss curvature satisfies $-1\leq K_i\leq 0$ for all $i$. Assume that $\Delta$ is the geodesic disc of radius $R_i$ with respect to the metric $\mathrm{d}s_i^2$. If $\lim\limits_{i\to\infty} R_i=\infty$ and the sequence $\{\mathrm{d}s_i^2\}_{i\geq 1}$ converges uniformly on any compact set of $\Delta$ to a metric $\mathrm{d}s^2$, then the distance from any point to the origin with respect to the metric $\mathrm{d}s^2$ is greater than or equal to the corresponding hyperbolic distance. In particular, such metric $\mathrm{d}s^2$ is complete.
\end{lem}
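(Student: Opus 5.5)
The plan is to reduce the statement to Lemma~\ref{compare with the hyperbolic distance}, applied to each metric $\mathrm{d}s_i^2$ on a suitable subdisc, and then pass to the limit $i\to\infty$ using the uniform convergence on compacta. Write $d_i(\cdot,\cdot)$, $d(\cdot,\cdot)$ and $d_h(\cdot,\cdot)$ for the distances of $\mathrm{d}s_i^2$, $\mathrm{d}s^2$ and the Poincar\'e metric of $\Delta$, and $R(r)=d_h(0,\{|z|=r\})$ for the hyperbolic radius of $\Delta_r$.

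\textbf{Step 1: choose good subdiscs.} For each $i$ I want a radius $r_i<1$ with
\[
\rho_i(r_i):=d_i\big(0,\{|z|=r_i\}\big)\geq R(r_i)\quad\text{and}\quad r_i\to 1 .
\]
Both $\rho_i(r)$ and $R(r)$ are continuous and increasing in $r$. Since $\Delta$ is the geodesic disc of radius $R_i$ about $0$ for $\mathrm{d}s_i^2$, we have $\rho_i(r)\to R_i$ as $r\to 1$, while $R(r)\to\infty$. I would therefore take $r_i$ to be the radius with $R(r_i)=\min\{R_i,\ \sup_{r<1}\rho_i(r)\}-1$ (or the largest $r$ with $\rho_i(r)\ge R(r)$, if that set is nonempty near $1$). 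The fact $R_i\to\infty$ should then force $R(r_i)\to\infty$, hence $r_i\to1$.

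\textbf{Step 2: apply Lemma~\ref{compare with the hyperbolic distance} on $\Delta_{r_i}$.} The hypotheses $-1\le K_i\le 0$ and $\rho_i(r_i)\ge R(r_i)$ hold, so the lemma gives $d_i(0,z)\ge d_h(0,z)$ for all $|z|<r_i$. Here the distance is measured inside $\Delta_{r_i}$; any curve leaving $\Delta_{r_i}$ already has $\mathrm{d}s_i^2$-length at least $\rho_i(r_i)\ge R(r_i)\ge d_h(0,z)$, so the bound also holds for the distance in $\Delta$.

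\textbf{Step 3: pass to the limit.} Fix $z\in\Delta$ and a piecewise smooth curve $\gamma$ from $0$ to $z$. Its image is a compact subset of $\Delta$, so uniform convergence $\lambda_i\to\lambda$ there gives $L_{\mathrm{d}s^2}(\gamma)=\lim_i L_{\mathrm{d}s_i^2}(\gamma)$. For $i$ large we have $|z|<r_i$, so by Step 2
\[
L_{\mathrm{d}s^2}(\gamma)\ \ge\ \liminf_i d_i(0,z)\ \ge\ d_h(0,z).
\]
Taking the infimum over $\gamma$ yields $d(0,z)\ge d_h(0,z)$. Completeness follows: a divergent curve in $\Delta$ has infinite hyperbolic length from $0$, hence infinite $\mathrm{d}s^2$-length, and the Hopf--Rinow criterion applies.

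\textbf{Main obstacle.} Step 1 is the delicate part: guaranteeing $\rho_i(r_i)\ge R(r_i)$ with $r_i\to1$. Knowing $\rho_i(r)\to R_i$ alone does not compare $\rho_i$ with $R$ at intermediate radii. If the direct intermediate-value choice fails, I would first try a rescaling. Pull back $\mathrm{d}s_i^2$ by $z\mapsto t_iz$ with $t_i\to1$, normalized so that $\Delta_{t_i}$ has $\mathrm{d}s_i^2$-radius equal to $R(t_i)$; rescaling only scales $\lambda_i$ and leaves the curvature bounds unchanged. The uniform convergence on compacta is used to check that this normalization is compatible with $R_i\to\infty$.
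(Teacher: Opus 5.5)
\textbf{There is a genuine gap in Step 1.} On the unrescaled disc an admissible radius need not exist at all. Let $s_i\in(0,1)$ satisfy $R(s_i)=R_i$, and let $\mathrm{d}s_i^2$ be the pullback of the Poincar\'e metric under $z\mapsto s_iz$. Then:
\begin{itemize}
\item $K_i\equiv-1$;
\item $\Delta$ is the $\mathrm{d}s_i^2$-geodesic disc of radius $R_i\to\infty$;
\item $\mathrm{d}s_i^2$ converges locally uniformly to the Poincar\'e metric;
\item yet $\rho_i(r)=R(s_ir)<R(r)$ for every $r\in(0,1)$.
\end{itemize}
So no $r_i>0$ satisfies the hypothesis of Lemma~\ref{compare with the hyperbolic distance}. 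Your choice $R(r_i)=R_i-1$ violates it, and ``the largest $r$ with $\rho_i(r)\ge R(r)$'' is $0$.

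Your fallback does not repair this as written. Pulling back by $z\mapsto t_iz$ with $t_i<1$ goes the wrong way: it only restricts $\mathrm{d}s_i^2$ to $\Delta_{t_i}$. In this example the normalization ``$\Delta_{t_i}$ has $\mathrm{d}s_i^2$-radius $R(t_i)$'' has no solution $t_i>0$. Also, the uniform convergence is not what makes $t_i\to1$.

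\textbf{The fix.} You should stretch rather than shrink, and fix the scale directly from the hypothesis. Define $t_i\in(0,1)$ by $R(t_i)=R_i$, and transport $\mathrm{d}s_i^2$ to $\Delta_{t_i}$ via $w\mapsto w/t_i$, that is, $\tilde\lambda_i(w)=t_i^{-1}\lambda_i(w/t_i)$.
\begin{itemize}
\item This is an isometry, so $-1\le\tilde K_i\le0$.
\item The distance from $0$ to $\partial\Delta_{t_i}$ is exactly $R_i=R(t_i)$, so Lemma~\ref{compare with the hyperbolic distance} gives $\tilde d_i(0,w)\ge d_h(0,w)$ for $|w|<t_i$.
\item $t_i\to1$ follows automatically from $R_i\to\infty$.
\end{itemize}
The uniform convergence is needed for a different purpose: to show $\tilde\lambda_i\to\lambda$ uniformly on each compact $K\subset\overline{\Delta}_a$, $a<1$. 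Fix $b\in(a,1)$ and put $K'=\overline{\Delta}_b$. For large $i$ we have $t_i^{-1}K\subset K'$, and
\[
|\tilde\lambda_i(w)-\lambda(w)|\le t_i^{-1}\sup_{K'}|\lambda_i-\lambda|+t_i^{-1}|\lambda(w/t_i)-\lambda(w)|+(t_i^{-1}-1)\lambda(w)\longrightarrow 0 .
\]
The middle term tends to zero by uniform continuity of $\lambda$ on $K'$, since $|w/t_i-w|\le(t_i^{-1}-1)a$.

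With $\tilde\lambda_i$ in place of $\lambda_i$, your Step 3 limit argument and the completeness conclusion go through unchanged. The discussion of curves leaving $\Delta_{r_i}$ in Step 2 becomes unnecessary, since $\Delta_{t_i}$ is the whole rescaled domain. This is the Osserman--Ru argument, and it is the same reparametrization the paper uses at the end of the proof of Theorem~\ref{gauss curvature estimate}.
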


\begin{lem}\cite[Proposition 1]{Chen-Li-Liu-Ru21}
	\label{limit of curvatures}
	Let $M$ be an open simply connected Riemann surface and let $\left\{f_i\colon M\rightarrow\mathbb{CP}^n\right\}_{i\geq 1}$ be a sequence of holomorphic mappings. For each $i\geq 1$ and for a positive integer $p$, we consider the sequence of conformal metrics on $M$ given by
	\[
	\mathrm{d}s_i^2=\|F_i\|^{2p}|\mathrm{d}z|^2,
	\]
	where $F_i=[f_{i,0}:f_{i,1}:\dots:f_{i,n}]$ is a fixed reduced representation of $f_i$. Denote by $K_i$ the Gauss curvature with respect to $\mathrm{d}s_i^2$. If the sequence $\{f_i\}_{i\geq 1}$ converges uniformly on every compact subset of $M$ to a non-constant holomorphic map $g$ and the sequence $\{|K_i|\}_{i\geq 1}$ is uniformly bounded, then one of the following statements holds true:
	\begin{enumerate}
		\item There exists a subsequence of $\{K_i\}_{i\geq 1}$ converging to $0$.
		\item For each $0\leq j\leq n$, there exists a subsequence $\{f_{i_k,j}\}_{k\geq 1}$ of $\{f_{i,j}\}_{i\geq 1}$ converging to a holomorphic function $h_j$ on $M$. Moreover, $h_0,\dots,h_n$ have no common zero.
	\end{enumerate}
\end{lem}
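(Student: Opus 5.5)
Throughout I write $\Delta$ for the Euclidean Laplacian in the global coordinate $z$ (on simply connected open $M$ we may take $z$ global). The plan is to express $K_i$ through the Fubini--Study density of $f_i$ divided by a power of $\|F_i\|$. Then the bounded-curvature hypothesis becomes a lower bound on a sequence of harmonic functions, and a Harnack-type dichotomy for those harmonic functions produces the two alternatives.

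\textbf{Step 1 (curvature formula).} For $\mathrm{d}s_i^2=\lambda_i^2|\mathrm{d}z|^2$ with $\lambda_i=\|F_i\|^{p}$ we have $K_i=-\Delta\log\lambda_i/\lambda_i^2$. Moreover
\[
\Delta\log\|F_i\|=4\,\frac{\|F_i\wedge F_i'\|^2}{\|F_i\|^4}=:\sigma_{f_i},
\]
which depends only on $f_i$ and is the Fubini--Study area density of $f_i$ in the coordinate $z$. Hence
\[
|K_i|=p\,\frac{\sigma_{f_i}}{\|F_i\|^{2p}}.
\]
Since $f_i\to g$ uniformly on compacts, with derivatives, we get $\sigma_{f_i}\to\sigma_g$ locally uniformly. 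Because $g$ is non-constant, $\sigma_g$ vanishes only on a discrete set $S\subset M$.

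\textbf{Step 2 (normalization).} Fix a reduced representation $G$ of $g$. On a relatively compact simply connected domain $\Omega\Subset M$ and for $i$ large, I will produce reduced representations $\widetilde G_i$ of $f_i|_\Omega$ with $\widetilde G_i\to G$ uniformly on $\Omega$. Concretely, locally divide by a component of $G$ that does not vanish, compare affine coordinates, and glue the nowhere-vanishing multipliers using simple connectivity. Then $F_i=\varphi_i\widetilde G_i$ with $\varphi_i$ holomorphic and nowhere zero on $\Omega$, so $u_i=\log|\varphi_i|$ is harmonic. This construction is the step I expect to be the main technical obstacle: the gluing must be done carefully, and one must pass to an exhaustion of $M$ with a diagonal subsequence at the end.

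\textbf{Step 3 (lower bound and dichotomy).} From $|K_i|\le C$ we get $|\varphi_i|^{2p}\|\widetilde G_i\|^{2p}\ge p\sigma_{f_i}/C$. Hence $u_i\ge -C_L$ on every compact $L\subset\Omega\setminus S$. Surrounding each point of $S$ by a small circle and applying the minimum principle to the harmonic function $u_i$ extends this to $u_i\ge -C_L$ on every compact $L\subset\Omega$. By Harnack's inequality applied to $u_i+C_L\ge 0$ on shrinking domains, a subsequence either tends to $+\infty$ locally uniformly, or is locally uniformly bounded.

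In the first case $\|F_i\|\to\infty$ locally uniformly while $\sigma_{f_i}$ stays bounded, so $K_i\to 0$ locally uniformly; this is alternative (1).

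In the second case, Montel's theorem gives a subsequence $\varphi_{i_k}\to\varphi$ locally uniformly. The bound $|\varphi_{i_k}|\ge e^{-C_L}$ shows that $\varphi$ is nowhere zero. Thus $F_{i_k}=\varphi_{i_k}\widetilde G_{i_k}\to\varphi G$, whose components $h_j=\varphi G_j$ have no common zero since $G$ is reduced; this is alternative (2).

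\textbf{Step 4 (globalization).} Finally, run Steps 2 and 3 over an exhaustion $\Omega_1\Subset\Omega_2\Subset\cdots$ of $M$. If alternative (1) occurs on some $\Omega_\ell$ we are done. Otherwise, a diagonal subsequence gives limits $h_j$ on all of $M$. These limits are consistent across the $\Omega_\ell$ because the $F_i$ themselves are fixed global objects, so the limit of $F_{i_k}$ on $\Omega_\ell$ restricts to the limit on $\Omega_{\ell-1}$.
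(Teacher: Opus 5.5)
The paper does not prove this lemma. It is quoted from \cite[Proposition 1]{Chen-Li-Liu-Ru21} and used as a black box, so there is no in-paper argument to compare yours with.

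Judged on its own merits, your argument is correct. The pieces are:
\begin{itemize}
\item the identity $|K_i|=p\,\sigma_{f_i}/\|F_i\|^{2p}$ (with $\Delta=4\partial\bar\partial$ the correct constant is $\Delta\log\|F\|=2\|F\wedge F'\|^2/\|F\|^4$, not $4$, which is harmless);
\item the locally uniform convergence $\sigma_{f_i}\to\sigma_g$, which follows from affine charts and Cauchy estimates;
\item the discreteness of $\{\sigma_g=0\}$;
\item the lower bound on $\log|\varphi_i|$ off that set, extended across it by the minimum principle;
\item Harnack's principle.
\end{itemize}
Together these give exactly the two alternatives. In the bounded case, the bound $|\varphi_{i_k}|\ge e^{-C_L}$ is precisely what excludes common zeros of the $h_j$.

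\textbf{Step 2 is heavier than necessary.} Simple connectivity is not really what makes it work. If you keep it, it is an additive Cousin problem with estimates. The transition functions $G_a f_{i,b}/(f_{i,a}G_b)$ tend to $1$ uniformly on overlaps, so their principal logarithms form a small additive cocycle. A partition of unity and a bounded solution operator for $\bar\partial$ on a disc then split this cocycle, with sup norms tending to $0$.

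You can also bypass Step 2 entirely. On a small disc $U$ with $G_a\neq 0$ on $\overline U$, the harmonic function $\log|f_{i,a}/G_a|$ differs from the globally defined $\log\|F_i\|-\log\|G\|$ by $o(1)$ uniformly. So whether the sequence is \emph{locally bounded} or \emph{tends to $+\infty$} is a property of $\log\|F_i\|$ itself. Harnack's principle on overlapping discs, together with the connectedness of $M$, then forces the same alternative on every disc. In the bounded case, the \emph{global} functions $f_{i_k,j}$ converge on each disc of a countable cover, so after a diagonal extraction the limits $h_j$ patch automatically.

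\textbf{State the global form of alternative (1).} Say explicitly that when (1) occurs, $K_{i_k}\to 0$ locally uniformly on all of $M$, not just on one $\Omega_\ell$. This follows because the alternative chosen on $\Omega_1$ persists on every $\Omega_\ell$. Your harmonic functions on consecutive domains differ by $\log\bigl(\|\widetilde G_i^{(\ell-1)}\|/\|\widetilde G_i^{(\ell)}\|\bigr)=o(1)$ on the overlap. This global form is what the paper actually needs. In the proof of Theorem~\ref{gauss curvature estimate}, alternative (1) is discarded without comment, on the grounds that $K_i(0)=-\frac14$ for all $i$, which requires (1) to give convergence at the origin.
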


\subsection{Proof of Theorem \ref{gauss curvature estimate}}
\begin{proof}[Proof of Theorem~\ref{gauss curvature estimate}]
	Assume on the contrary that the curvature estimate (\ref{eq:curvature-main}) fails. Then there exists a sequence of holomorphic maps $\left\{g_i\colon M_i\rightarrow \mathbb{CP}^n\right\}_{i\geq 1}$, each defined on an open Riemann surface $M_i$ and ramified over $D$ with multiplicity at least $\mu$ satisfying
	\begin{equation}
		\label{multiplicity condition of gi}
		\dfrac{1}{\mu}<\dfrac{1}{d}\bigg(\dfrac{\widetilde{m}}{m}-2p\bigg),
	\end{equation}
	together with a sequence of points $\left\{p_i\in M_i\right\}_{i\geq 1}$ such that
	\[|K_i(p_i)| \mathrm{d}_i^2(p_i)\rightarrow\infty\quad \text{as $i\to\infty$},\]
	with respect to the induced conformal metric $\mathrm{d}s_i^2$. 
	
	For each $i\geq 1$, taking the universal cover of $M_i$ if necessary, we may assume that $M_i$ is simply connected. By the uniformization theorem, $M_i$ is therefore conformally equivalent to either $\mathbb{C}$ or the unit disc $\Delta$. The first case is ruled out by Corollary~\ref{ramification}, since the multiplicity $\mu$ satisfies \eqref{multiplicity condition of gi}. Henceforth we may assume that each $M_i=\Delta$ and $p_i=0$ as the origin. 
	
	Following the same arguments in \cite[~p.465]{Chen-Li-Liu-Ru21}, we can assume furthermore that each Riemann surface $M_i$ can be selected as a geodesic disc of radius $R_i\to \infty$ with respect to the conformal metric $\mathrm{d}s_i^2$, and that
	\[
	K_i(p_i)=-\frac{1}{4}\quad\text{and}\,\, -1\leq K_i\leq 0.
	\]
	Indeed, we can assume $M_i$ to be a geodesic disc centered at $p_i$ and set \[
	M_i^* \coloneq \{p \in M_i : \mathrm{d}_i(p, p_i) \leq \mathrm{d}_i(p_i)/2\}.
	\]
	Then, denoting by \( \mathrm{d}_i^*(p) \) the geodesic distance from \( p \) to \( \partial M_i^* \), the Gauss curvature \(|K_i|\) is uniformly bounded on \( M_i^* \) and \( \mathrm{d}_i^*(p) \) tends to zero as \( p \) tends to \( \partial M_i^* \). Hence there exists an interior point \( p_i^* \in M_i^* \) such that
	\[
	|K_i(p_i^*)|(\mathrm{d}_i^*(p_i^*))^2 = \max_{p \in M_i^*}|K_i(p)|(\mathrm{d}_i^*(p))^2,
	\]
	and thus one has
	\[
	|K_i(p_i^*)|(\mathrm{d}_i^*(p_i^*))^2 \geq |K_i(p_i)|(\mathrm{d}_i^*(p_i))^2 = \frac{1}{4}|K_i(p_i)|(\mathrm{d}_i(p_i))^2 \to \infty.
	\]
	So we can replace each \( M_i \) by \( M_i^* \) with \( |K_i(p_i^*)|\left(\mathrm{d}_i^{*}(p_i^*)\right)^2 \to \infty \) and rescale \( M_i^* \) to make \( K_i(p_i^*) = -\frac{1}{4} \). We keep the notation \( \mathrm{d}_i^*\left(\cdot\right) \) to denote the geodesic distance after rescaling. Again, we can assume \( M_i^* \) to be a geodesic disc centered at \( p_i^* \) and let
	\[
	M_i^{**} \coloneq \{p \in M_i^* : \mathrm{d}_i(p, p_i^*) \leq \mathrm{d}_i^*(p_i^*)/2\}.
	\]
	Then \( p \in M_i^{**} \) implies that \( \mathrm{d}_i^*(p)\geq \left|\mathrm{d}_i^*(p_i^*)- \mathrm{d}_i(p, p_i^*)\right| \geq \frac{\mathrm{d}_i^*(p_i^*)}{2} \) and that
	\[
	|K_i(p)|(\mathrm{d}_i^*(p))^2 \leq |K_i(p_i^*)|(\mathrm{d}_i^*(p_i^*))^2 = \frac{1}{4}(\mathrm{d}_i^*(p_i^*))^2 \leq (\mathrm{d}_i^*(p))^2.
	\]
	That is, \( |K_i(p)| \leq 1 \) for all \( p \in M_i^{**} \). Furthermore, the radius $R_i$ of $M_i^{**}$ satisfies
	\[
	2R_i\geq \mathrm{d}_i(p_i^*, \partial M_i^{**}) = \frac{\mathrm{d}_i^*(p_i^*)}{2} \to \infty. 
	\]
	Replacing each $M_i$ by $M_i^{**}$, we obtain the desired sequence of open Riemann surfaces.
	
	Returning to the sequence of holomorphic maps $\left\{g_i\colon M_i\rightarrow \mathbb{CP}^n\right\}_{i\geq 1}$, it follows from Corollary \ref{cor of ramification gauss maps}, Theorem~\ref{normal family} and the assumption \eqref{multiplicity condition of gi} that the family $\{g_i\}_{i\geq 1}$ is normal. Hence, after extracting a subsequence, we can assume that the family $\{g_i\}_{i\geq 1}$ converges to a holomorphic map $g$ uniformly on any compact subset of $\Delta$. For each $i$, we fix a reduced representation $g_i=[g_{i,0}:g_{i,1}:\dots:g_{i,n}]$. By Lemma \ref{limit of curvatures}, for each $0\leq j\leq n$, there exists a subsequence of holomorphic functions $\{g_{i,j}\}_{i\geq 1}$ converging to a holomorphic function $h_j$ on every compact subset of $\Delta$ and furthermore, the functions $h_0,\dots,h_n$ have no common zero. Obviously, we have $g=[h_0:\dots:h_n]\colon \Delta\to \mathbb{CP}^n$. 
	
	We now claim that $g$ is constant. Indeed, suppose that it is not the case, then by Lemma \ref{compare with the hyperbolic distance, after passing limit} and the previous construction, the metric $\mathrm{d}s^2=\|g\|^{2p}|\mathrm{d}z|^2$ is complete on $\Delta$. Furthermore, by Hurwitz's Theorem, either $g$ is ramified over $D$ with multiplicity at least $\mu$ satisfying \eqref{multiplicity condition of gi} or the image of $g$ must be contained in $D$. This contradicts Corollary \ref{ramification gauss map} and the hyperbolic assumption of $D$.
	
	Now, suppose that $g(\Delta)=\{Q\}\in \mathbb{CP}^n$. Take a hyperplane $H$ which does not contain the point $Q$ and choose two disjoint neighborhoods $U_1$, $U_2$ of $H$, $Q$ respectively. From Lemma~\ref{curvature estimate, g omits neigh. of hyperplane}, for $g\colon \Delta\rightarrow \mathbb{CP}^n\setminus U_1$, there exists a constant $C$ depending on $U_1$ such that
	\[
	|K(p)|^{\frac{1}{2}}\mathrm{d}(p)\leq C.
	\]
	Select $0<r<1$ such that the hyperbolic radius $R$ of the disc $\Delta_r$ satisfies $R>2C$.   
	Since the sequence $\{g_i\}_{i\geq 1}$ converges to $g$ uniformly on $\overline{\Delta}_r$, we obtain that for sufficiently large $i\gg 1$, the image $g_i\left(\overline{\Delta}_r\right)\subset U_2$, omitting $U_1$ consequently. This implies that for such sufficiently large $i$, we have
	\[
	|K_i(0)|^{\frac{1}{2}}\mathrm{d}_i(r)\leq C,
	\]
	where $\mathrm{d}_i(r)=\mathrm{d}_i\left(0,\partial \Delta_r\right)$ is the geodesic distance with respect to the metric $\mathrm{d}s_i^2$. Again by our construction that $K_i(0)=-\frac{1}{4}$ for all $i$, we deduce that 
	\begin{equation}\label{estimate di(r), <=}
		\mathrm{d}_i(r)\leq 2C,\quad \text{for $i\gg 1$}.
	\end{equation}
	Recall our construction that $M_i=\Delta$ is a geodesic disc with radius $R_i\to \infty$ under the metric $\mathrm{d}s_i^2$. We can reparameterize $M_i$ as $\Delta_{r_i}=\{w:|w|<r_i\}$, where $r_i$ is chosen such that the disc $\Delta_{r_i}$ also has a hyperbolic radius $R_i$. Then for above selected $r$, the circle $\{z:|z|=r\}$ corresponds to the circle $\{w:|w|=r_ir\}$. Since $R_i\rightarrow \infty$, one has $r_i\rightarrow 1$ as $i\rightarrow\infty$. This implies that the hyperbolic radius of $\left\{|w|=r_ir\right\}$ tends to the one of $\left\{|w|=r\right\}$, which is $R>2C$. On the other hand, since $-1\leq K_i\leq 0$ on $M_i$,  Lemma~\ref{compare with the hyperbolic distance} yields that the distance with respect to the metric $\mathrm{d}s^2_i$ from the origin to any point on the circle $\{z:|z|=r\}$, or equivalently the circle $\{w:|w|=r_ir\}$, is not less than the hyperbolic distance from the origin to any point on $\{w:|w|=r_ir\}$. Hence we have $\mathrm{d}_i(r)\geq R>2C$, contradicting \eqref{estimate di(r), <=}. 
	This finishes the proof of Theorem~\ref{gauss curvature estimate}.
\end{proof}
\begin{proof}[Proof of Corollary \ref{cor of gauss curvature estimate}]
	This follows directly from Theorem \ref{gauss curvature estimate} and Corollary \ref{cor of ramification gauss maps}, where we choose $\widetilde{m}=(2p+1)m$.
\end{proof}
\section*{Acknowledgements}
D.T. Huynh is supported by the Vietnam National Foundation for Science and Technology Development (NAFOSTED) under the grant number 101.04-2025.19. Yunling Chen thanks to the Academy of Mathematics and Systems Science in Beijing for the great research environment.
\bigskip
\addcontentsline{toc}{chapter}{Bibliography}

\bibliographystyle{plain}
\bibliography{article}
\address
\end{document}